\documentclass[11pt,final]{article}
\usepackage{amsmath,amsthm,amsfonts,amssymb,graphicx,enumerate,psfrag}
\usepackage{mathrsfs}
\usepackage{fullpage}

\usepackage[colorlinks,citecolor=blue,urlcolor=blue]{hyperref}
\usepackage[utf8]{inputenc} 
\newtheorem{lemma}{Lemma}
\newtheorem{theorem}{Theorem}
\newtheorem{prop}{Proposition}
\newtheorem{example}{Example}
\newtheorem{corollary}{Corollary}
\newtheorem{remark}{Remark}

\newcommand{\dN}{\mathbb {N}}

\newcommand{\dZ}{\mathbb {Z}}
\newcommand{\dR}{\mathbb {R}}
\newcommand{\dG}{\mathbb {G}}
\newcommand{\cG}{\mathcal {G}}
\newcommand{\rr}{\mathcal {R}}
\newcommand{\rl}{\mathcal {L}}
\newcommand{\cS}{\mathcal {S}}
\newcommand{\cN}{\mathcal {N}}
\newcommand{\cF}{\mathcal {F}}
\newcommand{\cX}{\mathcal {X}}
\newcommand{\cU}{\mathcal {U}}
\newcommand{\trel}{{\rm t}_{\textsc{rel}}}
\newcommand{\tmix}{{\rm t}_{\textsc{mix}}}
\newcommand{\tsep}{{\rm t}_{\textsc{sep}}}
\newcommand{\EE}{{\mathbb{E}}}
\newcommand{\PP}{{\mathbb{P}}}
\newcommand{\bP}{{\mathbf{P}}}
\newcommand{\bE}{{\mathbf{E}}}

\newcommand{\s}{{\rm{s}}}
\newcommand{\Tfill}{T}
\newcommand{\sep}{{\textsc{sep}}}
\newcommand{\dd}{{\rm d}_{\sep}}
\newcommand{\dtv}{{\rm d}_{\textsc{tv}}}
\newcommand{\ARW}{{\rm ARW}}
\newcommand{\IDLA}{{\rm IDLA}}

\title{Separation cutoff for Activated Random Walks}
\author{Alexandre Bristiel (ENS Lyon) and Justin Salez (Paris-Dauphine)} 
\begin{document}

\maketitle
\begin{abstract}We consider Activated Random Walks on arbitrary finite networks, with particles being inserted at random and absorbed at the boundary. Despite the non-reversibility of the dynamics and the lack of knowledge on the stationary distribution, we explicitly determine the relaxation time of the process, and prove that separation cutoff is equivalent to the product condition. We also provide sharp estimates  on the center and width of the cutoff window. Finally, we illustrate those results by establishing explicit separation cutoffs on various networks, including: (i) large finite subgraphs of any fixed infinite non-amenable graph, with absorption at the boundary and (ii) large finite vertex-transitive graphs with absorption at a single vertex. The latter result settles a conjecture of Levine and Liang. Our proofs rely on the refined analysis of a strong stationary time recently discovered by Levine and Liang and involving the IDLA process.
\end{abstract}
\tableofcontents

\section{Introduction}

\subsection{Background and motivation}

Introduced a decade ago by Rolla and Sidoravicius \cite{ARWRoSi}, the  \emph{Activated Random Walks} (\ARW) process has quickly become the best candidate in the fascinating quest for a universal model of  \emph{self-organized criticality} \cite{MR1933220,SOC,Manna,Manna1991}.  
In a nutshell, this interacting particle system involves two species called \emph{active}
and \emph{sleeping}: active particles perform random walks and fall asleep at a constant rate $\lambda\in(0,\infty)$, while sleeping particles become active upon contact with another particle. On an infinite transitive graph $\dG$  such as $\mathbb \dZ^d$, a central question is that of \emph{stabilization}: if we start with one active particle at the origin and a Bernoulli($\zeta$) configuration of sleeping particles,  will the active species almost-surely die out? Rolla, Sidoravicius and
Zindy \cite{MR3956161} showed the existence of a critical density $ \zeta_c=\zeta_c(\dG)$ below which the answer is yes and above which it is no. This phase transition is moreover known to be non-trivial (i.e., $0<\zeta_c<1$) for many transitive graphs \cite{MR3813989}, including $\dZ^d$ for all $d\ge 1$ \cite{highdim,onedim,twodim}. We refer to the survey \cite{ARWsurvey} for additional details and references.

The present paper is concerned with the much less understood \emph{finitary setup}, recently considered by Basu, Ganguly, Hoffman and Richey \cite{MR4010935} or Levine and Liang \cite{ARWmixing}. Specifically, we restrict the \ARW\ dynamics to a large but finite domain $V$ in $\dG$ and kill  any particle that exits $V$. Clearly, if we add an active particle to a configuration of sleeping particles,  then the process will eventually stabilize to a new, random configuration of sleeping particles almost-surely. Iterating this procedure gives rise to a natural Markov chain on sleeping configurations, whose behavior is expected to be closely related to that of the unrestricted dynamics on $\dG$. In particular, it is believed to exhibit the following form of self-organized criticality: in a suitable double limit  where both the number of iterations and the size of the domain $V$ tend to infinity,  the proportion of occupied sites in $V$ should spontaneously concentrate around  the critical value $\zeta_c(\dG)$ for stabilization on the infinite graph $\dG$. Even more remarkably, the worst-case total-variation distance to equilibrium of the chain is expected to drop abruptly from $1-o(1)$ to $o(1)$ (a phase transition known as a  \emph{cutoff}) as the number of steps passes the critical threshold $|V|(\zeta_c(\dG)+o(1))$. See the recent work \cite{ARWmixing} and  the forthcoming paper \cite{Conj}  for details.

Motivated by those fascinating predictions, we here investigate the mixing properties of the  above Markov chain on arbitrary finite networks, when convergence to equilibrium is measured in the (stronger) \emph{separation distance}. In particular, we explicitly determine the relaxation time, we obtain sharp bounds on the separation mixing time, and we completely characterize the occurrence of a separation cutoff. Finally, we illustrate those results by establishing explicit  cutoffs on various concrete geometries. Before we can state our precise results, let us   define our setup more formally.

\subsection{Setup}
\paragraph{Geometry.} Throughout the paper, $V$ is a  finite set whose elements are called \emph{sites}. The geometry of our model is specified by a \emph{sub-stochastic matrix} on $V$, i.e., a function $K\colon V^2\to[0,1]$ such that
\begin{eqnarray*}
\forall x\in V, \quad \sum_{y\in V}K(x,y) & \le & 1.
\end{eqnarray*}
The latter naturally describes the evolution of a \emph{killed random walk}  (henceforth referred to as a $K-$walk) which, when alive at a site $x\in V$, jumps to a new site $y\in V$ at rate $K(x,y)$ and is killed at rate $1-\sum_{y\in V}K(x,y)$. We will always assume that the matrix $K$ is \emph{non-degenerate} in the sense that no principal sub-matrix of $K$ is stochastic: this guarantees that the $K-$walk is killed eventually almost-surely, regardless of where it starts. Here is a simple generic example to keep in mind, and to which we shall come back later.

\begin{example}[Restriction of an infinite graph]\label{ex:graph} Let  $\dG=(V_\dG,E_\dG)$ be a locally finite, infinite connected graph, and let $P_{\dG}\colon V^2_{\dG}\to[0,1]$ denote the associated random-walk  transition matrix, i.e.
\begin{eqnarray*}
 P_{\dG}(x,y) & := & \left\{
 \begin{array}{ll}
 \frac{1}{\deg(x)} & \textrm{ if }\{x,y\}\in E_{\dG}\\
 0 & \textrm{else}.
\end{array} 
 \right.
 \end{eqnarray*} 
 Now, choose a finite set $V\subseteq V_{\dG}$. Then, the restriction $K:=P_{\dG}[V]$ of $P_{\dG}$ to $V^2$  is a non-degenerate sub-stochastic matrix, which describes the evolution of a random walk on $\dG$ killed upon exiting $V$. 
\end{example}
\paragraph{Dynamics.} The instantaneous state of our interacting particle system is described by a \emph{configuration} $\eta\in\{0,\s,1,2,\ldots\}^V$, with the following interpretation: each site $x\in V$ is either empty ($\eta(x)=0$), or occupied by a single \emph{sleeping} particle ($\eta(x)=\s$), or occupied by an arbitrary number $k\ge 1$ of \emph{active} particles ($\eta(x)=k$). Given a parameter $\lambda\in(0,\infty)$, we then consider the following continuous-time Markovian evolution on the space of configurations:
\begin{itemize}
\item active particles perform independent $K-$walks, but fall asleep at rate $\lambda$ when alone on a site;
\item sleeping particles do not move, but become active upon contact with another particle.
 \end{itemize}
Note that any configuration $\eta$ without active particles (i.e., $\eta\in \{0,\s\}^V$) is stable under this dynamics. In fact, our non-degeneracy assumption on $K$ guarantees that  the process will eventually  be absorbed in such a  stable configuration almost-surely, from any   initial configuration. This random procedure is called \emph{stabilization}. Note that it may significantly reduce the total number of particles in the system, but can not augment it. Following \cite{ARWmixing}, we now  compensate it with a simple insertion mechanism to define an ergodic Markov chain on  stable configurations. 

\paragraph{The $\ARW(K,\lambda,\nu)$ chain.} Fix a probability distribution $\nu$ on $V$ whose support intersects every connected component of $K$, so that every site has a chance of being visited by a $K-$walk starting from $\nu$. A natural example to keep in mind is of course the uniform law on $V$, henceforth denoted by $\cU(V)$ or simply $\cU$. Finally,  consider the discrete-time Markov chain on the set  $\cX:=\{0,\s\}^V$ which, at each step,  turns the current state $\eta$ into a new, random state $\xi$ as follows: 
\begin{enumerate}
\item make $\eta$ unstable by adding one active particle at a $\nu-$distributed random site;
\item run the \ARW\ dynamics to obtain a (random) stable configuration $\xi$. 
\end{enumerate}
Write  $P\colon \cX^2\to [0,1]$ for the corresponding transition matrix. Our assumptions on the parameters $(K,\lambda,\nu)$ guarantee  that $P$ is irreducible and aperiodic. In particular, we have
\begin{eqnarray}
\label{mixing}
\forall \eta,\eta'\in\cX,\qquad 
P^t\left(\eta,\eta'\right) & \xrightarrow[t\to\infty]{} & \pi(\eta'),
\end{eqnarray}
where $\pi=\pi P$ denotes the unique invariant distribution of the chain. We note that the latter is far from explicit, and that very little is known about it, despite fascinating predictions \cite{ARWmixing,Conj}. We are here interested in the speed at which the convergence  (\ref{mixing}) occurs. Formalizing this requires a few  definitions, which we now briefly recall. We refer the unfamiliar reader to the book \cite{MixingBook} for  details.

\paragraph{Separation distance, relaxation time and mixing time.} There are many natural ways to measure the distance to equilibrium of a Markov chain at a given time $t\in\dN$. We will here focus on the so-called \emph{separation distance} (see Section \ref{sec:tv} for a comparison with the total-variation distance):
\begin{eqnarray*}
\dd(t) & := & \max_{\eta,\eta'\in\cX}\left(1-\frac{P^t(\eta,\eta')}{\pi(\eta')}\right).
\end{eqnarray*}
The function $\dd\colon\dN\to[0,1]$ is non-increasing, with $\dd(0)=1$ and $\dd(+\infty)=0$. In our case, it is moreover  sub-multiplicative (see Lemma \ref{lm:submult} below). By Gelfand's formula, we classically have
\begin{eqnarray}
\label{decayrate}
\left(\dd(t)\right)^{1/t} \ \xrightarrow[t\to\infty]{} \ \rho, \ & \textrm{ where } & 
\rho \ := \ \max\left\{|z|\colon z\in\mathbb C\setminus\{1\}\textrm{ is an eigenvalue of }P\right\}. 
\end{eqnarray} 
In light of this, a first natural time-scale for the Markov chain is the so-called  \emph{relaxation time}:
\begin{eqnarray*}
\trel & := & \frac{1}{1-\rho}.
\end{eqnarray*}
This ``near-equilibrium'' parameter  has, however, very little to say  about the early behavior of the chain, because the geometric decay $\dd(t)\approx \rho^{t}$ promised by (\ref{decayrate}) is only valid in the $t\to\infty$ limit.  In the ``far-from-equilibrium" regime, a more appropriate  time-scale  is  the (separation) \emph{mixing time}: 
\begin{eqnarray*}
\tsep(\varepsilon) & := & \inf\{t\in\dN\colon\dd(t)\le \varepsilon\}.
\end{eqnarray*}
The default value for the precision parameter  $\varepsilon\in(0,1)$  is $1/2$, in which case we simply write $\tsep=\tsep(1/2)$. Note  that replacing $1/2$ with any smaller value $\varepsilon$ can not increase $\tsep$ by more than the multiplicative factor $\log_2\left(\frac{1}{\varepsilon}\right)$,  thanks to the  sub-multiplicativity of $\dd$.
\subsection{Main results}
The high-level message of our paper is that the mixing behavior of the $\ARW(K,\lambda,\nu)$ chain is essentially dictated by a very simple single-particle statistics, namely, the hitting probabilities 
\begin{eqnarray}
\label{def:p}
p(x) & := & \bP_\nu\left(\tau_x<\tau_\dagger\right),\quad x\in V.
\end{eqnarray}
Here and throughout the paper, we use the notation $\bP_\nu$ for the law of a  $K-$walk  with initial distribution $\nu$ (henceforth referred to as a $(K,\nu)-$walk), and write $\tau_\dagger$ for its life-time and $\tau_x$ for the hitting time of  $x$. Intuitively, a small value of $p(x)$ means that the site $x$ is poorly connected. The presence of such ``remote sites'' constitutes a natural obstruction to mixing, because a particle sleeping there may have to wait for a very long time before being activated. Perhaps surprisingly, our results below will show that this is the main obstruction to mixing. 
\begin{remark}[Reversible network]\label{rk:rev}There is a useful alternative expression for $p(x)$ in the  case where
\begin{eqnarray}
\label{def:rev}
\forall x,y\in V,\quad \nu(y)K(y,x) & = & \nu(x)K(x,y).
 \end{eqnarray} 
Indeed, this identity remains true if we replace $K$ with the Green's function $\cG=I+K+K^2+\cdots$, and we may then divide through  by $\cG(x,x)$ and sum over all $y\in V$ to arrive at the identity
\begin{eqnarray*}
\forall x\in V,\quad p(x) & = & \frac{\nu(x)\bE_x[\tau_\dagger]}{\cG(x,x)}.
\end{eqnarray*}
Note that the detailed balance condition (\ref{def:rev}) always holds in Example \ref{ex:graph}, with $\nu(x)\propto\deg(x)$.
\end{remark} 

\paragraph{Relaxation time.}  Our first main result is the exact determination of the relaxation time of the $\ARW(K,\lambda,\nu)$ chain in full generality, and despite the total lack of reversibility of the dynamics. 
\begin{theorem}[Relaxation time]\label{th:trel}The relaxation time of the $\ARW(K,\lambda,\nu)$ chain is exactly given by
\begin{eqnarray*}
\trel & = & \left(\min_{x\in V}p(x)\right)^{-1}.
\end{eqnarray*}
\end{theorem}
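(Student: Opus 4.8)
The plan is to establish the two matching bounds $\trel \le (\min_x p(x))^{-1}$ and $\trel \ge (\min_x p(x))^{-1}$ separately, the first coming from a strong stationary time and the second from an explicit near-eigenfunction (or a direct lower bound on $\dd(t)$).

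For the \emph{upper bound}, I would exploit the strong stationary time built from the IDLA process that the abstract attributes to Levine and Liang. Recall the general fact (see \cite{MixingBook}) that if $\Tfill$ is a strong stationary time for the chain, then $\dd(t) \le \bP(\Tfill > t)$ for all $t$. The key structural feature of the ARW chain, which I expect to be developed in the body of the paper, is that this particular strong stationary time decomposes as a sum (or can be coupled to behave like a sum) of independent geometric-type random variables, one "per site", where the site $x$ contributes a geometric random variable of success probability $p(x)$ — heuristically, at each insertion a $(K,\nu)$-walk is launched, it reaches $x$ before dying with probability $p(x)$, and once every site has been "covered" in the appropriate IDLA sense the configuration is exactly stationary. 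Combining this decomposition with $\dd(t)\le\bP(\Tfill>t)$ and computing the decay rate of the tail of a sum of independent geometrics — which is governed by the \emph{slowest} coordinate, i.e.\ the smallest $p(x)$ — gives $\rho \le 1-\min_x p(x)$, hence $\trel \le (\min_x p(x))^{-1}$. More carefully, one should show $(\dd(t))^{1/t}\to\rho$ forces $\rho = \lim (\bP(\Tfill>t))^{1/t} \le 1-\min_x p(x)$, using that the geometric with the smallest parameter dominates the tail.

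For the \emph{lower bound}, let $x_\star \in \arg\min_x p(x)$ and consider the configuration $\eta$ that has a sleeping particle exactly at $x_\star$ (and is empty elsewhere), compared against any target. The idea is that to change the state at $x_\star$ one must activate the particle sitting there, which requires some inserted $(K,\nu)$-walk to actually reach $x_\star$ during its lifetime; each single insertion does so with probability at most $p(x_\star)$ (this is where the definition \eqref{def:p} is used, together with the fact that after the first insertion reaching $x_\star$ the relevant event is controlled). Thus, starting from $\eta$, the probability that the state at $x_\star$ has not been touched after $t$ steps is at least $(1-p(x_\star))^t$, and on that event the chain is confined to the set of stable configurations agreeing with $\eta$ at $x_\star$; a short argument shows this yields $\dd(t) \ge c(1-p(x_\star))^t$ for a constant $c>0$, whence $\rho \ge 1-p(x_\star) = 1-\min_x p(x)$ and $\trel \ge (\min_x p(x))^{-1}$. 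Alternatively, and perhaps more cleanly, I would produce an explicit right near-eigenvector of $P$ with eigenvalue $\approx 1-p(x_\star)$, for instance the indicator-type function $f(\eta)=\mathbf{1}\{\eta(x_\star)=\s\}$ or a suitable harmonic-type modification of it, and verify $Pf = (1-p(x_\star))f + (\text{correction})$ by a one-step analysis of the insertion-then-stabilization mechanism.

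The main obstacle is the \emph{upper bound}: everything hinges on showing that the Levine–Liang IDLA strong stationary time really does decompose into independent (or suitably negatively correlated) per-site geometrics with parameters $p(x)$, and that no site's contribution is slower than its nominal $1-p(x)$ rate. This requires a genuinely refined analysis of the IDLA coupling — in particular an abelian/order-independence property of the stabilization procedure that lets one "schedule" the particles so that the event "$x$ gets covered" is, step by step, exactly the event that a fresh $(K,\nu)$-walk hits $x$ before dying. The lower bound, by contrast, is essentially a soft one-step estimate. I would therefore expect the bulk of the work, and the place where the non-reversibility and the lack of knowledge of $\pi$ are genuinely circumvented, to lie in the tail analysis of $\Tfill$.
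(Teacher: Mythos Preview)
Your overall architecture (upper bound via the IDLA strong stationary time, lower bound via a direct one-step estimate) is reasonable, but both halves have issues, and your assessment of where the difficulty lies is inverted.

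\textbf{Upper bound.} The decomposition you describe---one independent geometric per site $x$ with parameter $p(x)$---is not how the IDLA filling time behaves, and no abelian rescheduling will make the event ``$x$ gets covered at step $t$'' coincide with ``the $t$-th $(K,\nu)$-walk hits $x$''. The paper's argument is actually simpler than what you propose: write $T=\sum_{k=0}^{n-1}(T_{k+1}-T_k)$ where $T_k$ is the first time $|\cS_t|=k$; conditionally on the past, $T_{k+1}-T_k$ is geometric with success probability $p(\cS_{T_k}^c)\ge p_\star:=\min_x p(x)$ (since the uncovered set is nonempty). Hence $T$ is stochastically dominated by a sum of $n$ i.i.d.\ $\mathrm{Geom}(p_\star)$ variables, and an exponential Markov bound gives $\limsup_t \PP(T>t)^{1/t}\le 1-p_\star$. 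No refined scheduling is needed.

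\textbf{Lower bound.} Here there is a genuine gap. Your one-step claim---that from the configuration with a single sleeper at $x_\star$, each step touches $x_\star$ with probability at most $p(x_\star)$---fails after the first step: once other sleepers have accumulated, a newly inserted particle can trigger a cascade of activations, and the resulting active particles may reach $x_\star$ even if the inserted $(K,\nu)$-walk itself does not. A two-site example with $\nu$ a Dirac mass already exhibits per-step touching probabilities strictly exceeding $p(x_\star)$. Your fallback (an explicit near-eigenfunction such as $\mathbf{1}\{\eta(x_\star)=\s\}$) does not work either, because $Pf(\eta)$ depends on all of $\eta$, not just on $\eta(x_\star)$.

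The paper sidesteps this entirely by first upgrading the Levine--Liang inequality $\dd(t)\le\PP(T>t)$ to the \emph{equality} $\dd(t)=\PP(T>t)$ (their Proposition~\ref{pr:reduction}, obtained by identifying the extremal pair $(\eta,\eta')=(\mathbf{0},\mathbf{s})$ and using that Stage~(i) of the two-stage stabilization from $\mathbf{0}$ is exactly IDLA). Once you have the equality, the lower bound is a one-liner: IDLA monotonicity gives $\PP(T>t)\ge\PP(T^{V\setminus\{x\}}>t)=(1-p(x))^t$. So the real content you are missing is not a ``refined tail analysis'' for the upper bound, but the optimality of the strong stationary time, which is precisely what makes the lower bound soft.
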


\paragraph{Window and cutoff.} Our second main result concerns the \emph{cutoff phenomenon}, a remarkable phase transition in the convergence to equilibrium of certain processes \cite{MR1374011}. Specifically, a sequence of ergodic Markov chains (indexed by $n$) exhibits a  separation  cutoff if for any fixed $\alpha\ge 0$,
\begin{eqnarray*}
\dd^{(n)}\left(\lfloor\alpha \tsep^{(n)}\rfloor\right) & \xrightarrow[n\to\infty]{} & \left\{
\begin{array}{ll}
1& \textrm{ if }\alpha<1\\
0 & \textrm{ if }\alpha>1.
\end{array}
\right.
\end{eqnarray*}
An equivalent formulation is that the time-scale over which the distance to equilibrium drops from near $1$ to near $0$ (known as the \emph{mixing window}) is much smaller than the center of the window, i.e. 
\begin{eqnarray*}
\forall \varepsilon\in(0,1/2), \quad 
 \tsep^{(n)}(\varepsilon)-\tsep^{(n)}(1-\varepsilon) & = & o\left(\tsep^{(n)}\right).
\end{eqnarray*}
Here and throughout the paper, we use the standard notations $a_n=o(b_n)$,  $a_n=\Omega(b_n)$ and $a_n=O(b_n)$ to respectively mean that the sequence $({a_n}/{b_n})_{n\ge 1}$ vanishes as $n\to\infty$, is bounded away from $0$, and is bounded away from $+\infty$.
Establishing  a cutoff is a notoriously delicate task, which a priori requires the determination of $\tsep^{(n)}(\varepsilon)$ within a multiplicative precision $1+o(1)$. 
A well-known simple necessary condition  is the so-called \emph{product condition}:
\begin{eqnarray}
\label{prod}
\trel^{(n)} & = & o\left(\tsep^{(n)}\right).
\end{eqnarray}
Unfortunately, the latter is too weak to guarantee   cutoff in general (see \cite[Example 18.7]{MixingBook} for a generic counter-example). Nevertheless, the condition (\ref{prod}) has been shown to guarantee separation cutoff for  birth-and-death chains \cite{MR2288715}, and for exclusion processes with reservoirs \cite{exclusion}. Our second main result adds all \ARW\ chains to this short list, and provides an estimate on the mixing window.
\begin{theorem}[Mixing window and cutoff]\label{th:cutoff}For any $\varepsilon\in(0,1/2)$, the $\ARW(K,\lambda,\nu)$ chain satisfies
 \begin{eqnarray*}
 \tsep(\varepsilon)-\tsep(1-\varepsilon) & \le & \sqrt{\frac{8\trel \tsep}\varepsilon}.
 \end{eqnarray*}
In particular, a sequence of $\ARW$ chains exhibits cutoff if and only if it satisfies (\ref{prod}).
\end{theorem}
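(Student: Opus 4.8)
The plan is to build on the strong stationary time for the $\ARW(K,\lambda,\nu)$ chain produced by the IDLA construction of Levine and Liang, and to turn the cutoff statement into a concentration estimate for it. Write $T$ for the fastest strong stationary time started from the worst-case initial configuration (which one expects to be the empty configuration $\mathbf 0$), so that $\dd(t)=\PP(T>t)$ for every $t\in\dN$. Justifying this identity --- that the IDLA-based time is an \emph{optimal} strong stationary time, and that the worst starting configuration may indeed be taken to be extremal --- is the first task, and it is where the abelian property of stabilization enters. Granting it, the whole statement reduces to estimating the first two moments of $T$, and comparing them to $\trel$ and $\tsep$.

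First I would control the mean. Since $\dd$ is non-increasing, submultiplicative (Lemma \ref{lm:submult}), and satisfies $\dd(\tsep)\le\tfrac12$, we get $\dd(k\tsep)\le2^{-k}$ for every $k\ge0$, and therefore
\begin{eqnarray*}
\EE[T]\ =\ \sum_{t\ge0}\PP(T>t)\ =\ \sum_{t\ge0}\dd(t) & \le & 2\,\tsep .
\end{eqnarray*}
The crux is then the variance bound $\mathrm{Var}(T)\le\trel\,\EE[T]\le 2\,\trel\,\tsep$. To obtain it I would decompose $T$ along the growth of the IDLA aggregate: each time a fresh site becomes occupied, the number of further insertions required is, conditionally on the past, a geometric random variable whose success parameter is the probability that a $(K,\nu)$-walk reaches the current set of still-unoccupied sites before dying. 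Reaching a \emph{set} of sites is no harder than reaching any prescribed site of it, so each such parameter is at least $\min_{x\in V}p(x)=\trel^{-1}$ by Theorem \ref{th:trel}; and a geometric variable with parameter $q\ge\trel^{-1}$ has variance at most $q^{-2}\le\trel\,q^{-1}$, i.e.\ at most $\trel$ times its mean. Summing over the $|V|-1$ growth steps gives the conditional variance bound. The genuinely delicate point --- the heart of the ``refined analysis'' --- is the passage from conditional to unconditional variance: one must show that the fluctuations coming from the \emph{random shape} of the growing aggregate do not spoil the estimate. I expect this to be the main obstacle.

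With both moment bounds available, the window estimate is Chebyshev's inequality. Writing $s:=\sqrt{\mathrm{Var}(T)/\varepsilon}$, for integer $t\ge\EE[T]+s$ one has $\dd(t)=\PP(T>t)\le\mathrm{Var}(T)/s^2=\varepsilon$, so $\tsep(\varepsilon)\le\EE[T]+s$; while for integer $t<\EE[T]-s$ one has $\dd(t)=\PP(T>t)\ge1-\mathrm{Var}(T)/s^2=1-\varepsilon$, so $\tsep(1-\varepsilon)\ge\EE[T]-s$. Subtracting and inserting $\mathrm{Var}(T)\le2\trel\tsep$,
\begin{eqnarray*}
\tsep(\varepsilon)-\tsep(1-\varepsilon)\ \le\ 2\sqrt{\mathrm{Var}(T)/\varepsilon} & \le & \sqrt{\frac{8\,\trel\,\tsep}{\varepsilon}}.
\end{eqnarray*}

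It remains to deduce the characterization of cutoff. If the product condition (\ref{prod}) holds, the right-hand side above equals $\sqrt{8/\varepsilon}\cdot\sqrt{\trel/\tsep}\cdot\tsep=o(\tsep)$ for each fixed $\varepsilon$, so the mixing window is negligible and separation cutoff occurs. Conversely, submultiplicativity of $\dd$ together with Gelfand's formula (\ref{decayrate}) forces $\dd(t)\ge\rho^{t}$, whence $\tsep(\varepsilon)\ge(\trel-1)\log(1/\varepsilon)$; since also $\tsep(1-\varepsilon)\le\tsep$, a failure of (\ref{prod}) along a subsequence makes $\tsep(\varepsilon)-\tsep(1-\varepsilon)=\Omega(\tsep)$ for every small enough $\varepsilon$, which precludes cutoff. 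This last implication is standard and could alternatively be quoted from \cite{MixingBook}.
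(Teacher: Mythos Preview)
Your overall architecture matches the paper exactly: identify $\dd(t)=\PP(T>t)$ with $T$ the \IDLA\ filling time (Proposition~\ref{pr:reduction}), bound $\EE[T]\le 2\tsep$ via submultiplicativity (Lemma~\ref{lm:L1}), bound the variance, apply Chebyshev (Lemma~\ref{lm:window}), and invoke the standard necessity of the product condition for the converse. Your mean estimate, the Chebyshev step, and the cutoff equivalence are all correct as written.

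The gap is precisely the one you flag, and it is not a technicality. Your decomposition $T=\sum_k(T_{k+1}-T_k)$ into conditionally geometric increments with success parameters $q_k\ge 1/\trel$ only controls $\sum_k\EE\big[\mathrm{Var}(\Delta_k\mid\cF_{T_k})\big]\le\trel\,\EE[T]$; the remaining contribution to $\mathrm{Var}(T)$ comes from the fluctuations of $\sum_k 1/q_k$, which encode the random shape of the aggregate and are \emph{not} bounded by anything in your argument. The paper does not close this by pushing the geometric decomposition further. Instead it appeals to a general variance inequality of Aldous for hitting times of monotone chains (Lemma~\ref{lm:aldous}): if the mean-hitting-time function $h(A)=\EE_A[T]$ drops by at most $\kappa$ along every allowed transition, then $\mathrm{Var}_A(T)\le\kappa\,\EE_A[T]$. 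Verifying the hypothesis with $\kappa=\trel$ (Lemma~\ref{lm:increment}) is where the new idea enters: a \emph{concavity} property of the \IDLA\ grand coupling (Lemma~\ref{lm:mono}(ii)) asserting that for $A\subseteq B$ the gap $|\cS^B_t|-|\cS^A_t|$ is non-increasing in $t$. Applied with $B=A\cup\{x\}$, this guarantees that at the moment the $B$-process fills $V$, the $A$-process has at most one vacant site left, whence $h(A)-h(A\cup\{x\})\le\max_y 1/p(y)=\trel$. This concavity, rather than the geometric-increment picture, is what tames the random-shape fluctuations.
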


\paragraph{Mixing time.}Our third main result is a  two-sided estimate on the mixing time  of the $\ARW(K,\lambda,\nu)$ chain,  which will be shown to be sharp in many concrete examples. The estimate involves two simple statistics $\rl$ and $\rr$, which are related to  the hitting probabilities $(p(x)\colon x\in V)$ as follows:
\begin{eqnarray*}
\rl \ := \ \left(\frac 1{|V|}{\sum_{x\in V}p(x)}\right)^{-1}, & \qquad & 
\rr \ := \ |V|\max_{x\in V}\left\{\frac{\bP_{\cU}(\tau_x<\tau_\dagger)}{p(x)}\right\}.
\end{eqnarray*}
Observe that the sum $\sum_{x\in V}p(x)$ is exactly the expected \emph{range} (number of distinct visited vertices) of a $(K,\nu)-$walk, and is thus at most the expected life-time $\bE_\nu[\tau_\dagger]$. In other words,
\begin{eqnarray}
\label{range}
\rl & \ge & \frac{|V|}{\bE_\nu[\tau_\dagger]}.
\end{eqnarray}
Note also that the parameter $\rr$ compares the actual hitting probabilities $(p(x)\colon x\in V)$ with those that would be obtained if $\nu$ was the uniform law $\cU$. In particular, we have $\rr=|V|$ when $\nu=\cU$.

\begin{theorem}[Sharp mixing-time estimates]\label{th:tsep}We have $\rl \ \le  \trel  \le \ \rr$ and
\begin{eqnarray*}
\max\left\{\trel,|V|,\frac{\rl \log |V|}{5}\right\} \ \le & \tsep & \le \ \left(\sqrt{\rr}+3\sqrt{\trel\log |V|}\right)^2.
\end{eqnarray*}
\end{theorem}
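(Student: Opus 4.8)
The proof is organised around the single-particle hitting probabilities $p(\cdot)$ and the strong stationary time $\Tfill$ of Levine and Liang, which is built from the \IDLA\ process.

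\textbf{The sandwich $\rl\le\trel\le\rr$ and the soft lower bounds.} That $\rl\le\trel$ is nothing but $\min_x p(x)\le\frac1{|V|}\sum_x p(x)$, combined with Theorem~\ref{th:trel} and inversion. For $\trel\le\rr$, note that a $(K,\cU)$-walk is at $x$ at time $0$ with probability $1/|V|$, so $\bP_\cU(\tau_x<\tau_\dagger)\ge 1/|V|$; the bracket in the definition of $\rr$ is thus at least $1/(|V|p(x))$, and maximising over $x$ gives $\rr\ge1/\min_x p(x)=\trel$. Two of the three lower bounds on $\tsep$ are soft. First, every step of the chain changes the particle count by at most $+1$ (insertion adds one, stabilisation destroys some), so the all-sleeping configuration $\mathbf s$ — of positive $\pi$-mass by irreducibility — cannot be reached from the empty configuration $\mathbf 0$ in fewer than $|V|$ steps; hence $\dd(t)=1$ for $t<|V|$, i.e.\ $\tsep\ge|V|$. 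Second, $\tsep\ge\trel$ is the standard relaxation-time bound, from Gelfand's formula (\ref{decayrate}), sub-multiplicativity of $\dd$ (Lemma~\ref{lm:submult}) and Theorem~\ref{th:trel}.

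\textbf{Reduction to the \IDLA\ filling time.} The engine of the two remaining estimates is the identity furnished by the (optimal) Levine–Liang strong stationary time started from the worst initial configuration $\mathbf 0$,
\[
\dd(t)\ =\ \bP_{\mathbf 0}\bigl(\Tfill>t\bigr),
\]
where $\Tfill$ is the number of particles that an \IDLA\ process — particles emitted from $\nu$, each performing a $K$-walk until it reaches a vacant site or is killed — needs in order to occupy all of $V$. Both bounds on $\tsep$ thus become bounds on the tail of $\Tfill$.

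\textbf{The coverage lower bound $\tsep\ge\tfrac{\rl}{5}\log|V|$.} A vacant site can be occupied only after some emitted walk has visited it, so $\Tfill\ge\inf\{t:W_t=V\}$, where $W_t:=R_1\cup\cdots\cup R_t$ and $R_i\subseteq V$ is the set of sites visited by the $i$-th emitted $(K,\nu)$-walk; the $R_i$ are i.i.d.\ with $\bP(x\in R_i)=p(x)$. Hence $\dd(t)\ge\bP(W_t\ne V)$, and it remains to see that this exceeds $\tfrac12$ for $t<\frac{\rl}{5}\log|V|$. With $N_t:=|V\setminus W_t|$ one has $\EE[N_t]=\sum_x(1-p(x))^t$, which is large for such $t$ by convexity of $u\mapsto(1-u)^t$ and the definition of $\rl$; a second-moment bound $\bP(N_t\ge1)\ge\EE[N_t]^2/\EE[N_t^2]$ then finishes the argument, the second moment being handled through the pairwise estimate $\bP(x,y\in R_i)\le p(x)\bP_x(\tau_y<\tau_\dagger)+p(y)\bP_y(\tau_x<\tau_\dagger)$ and the identity $\sum_y\bP_x(\tau_y<\tau_\dagger)=\EE_x[\mathrm{range}]$.

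\textbf{The upper bound, and the main obstacle.} Writing $\mathcal I_t$ for the \IDLA\ cluster after $t$ emissions, a union bound gives
\[
\dd(t)\ =\ \bP_{\mathbf 0}\bigl(\mathcal I_t\ne V\bigr)\ \le\ \sum_{x\in V}\bP\bigl(x\notin\mathcal I_t\bigr),
\]
reducing everything to a tail estimate for the occupation of a single site. The claimed bound follows from an estimate essentially of the form $\bP(x\notin\mathcal I_t)\le\exp\bigl(-(\sqrt t-\sqrt\rr)_+^2/(4\trel)\bigr)$: summed over $x\in V$ this is $|V|\exp(-(\sqrt t-\sqrt\rr)_+^2/(4\trel))$, which drops below $\tfrac12$ as soon as $\sqrt t\ge\sqrt\rr+3\sqrt{\trel\log|V|}$ (the right-hand side being then $|V|^{-5/4}$), i.e.\ $\tsep\le(\sqrt\rr+3\sqrt{\trel\log|V|})^2$. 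This single-site estimate is where the real work lies, and I expect it to be the main obstacle. Morally, the filling time of $x$ concentrates around a value $O(\rr)$ — the parameter $\rr$ serving to bound its mean, via the comparison of $p(\cdot)$ with uniform-start hitting probabilities that keeps the ``bulk'' reduction of the vacant set cheap — while the ``final'' vacant sites, each occupied only when a fresh walk reaches it before dying (rate $\ge\min_y p(y)=1/\trel$ per emission), contribute the coupon-collector correction of size $O(\trel\log|V|)$, and the fluctuations of the filling time, being those of a sum of $\sim\rr/\trel$ geometric-type waiting times of mean $\sim\trel$, are of order $\sqrt{\rr\,\trel}$. Turning this heuristic into the stated tail requires a Chernoff/negative-binomial estimate for a non-independent sum of geometric variables (which is why the exponent is Gaussian in $\sqrt t$, not in $t$) together with the sub-multiplicativity of $\dd$ to promote a one-shot bound into the full tail; symmetrically, the delicate point in the lower bound is the control of the correlations between vacant sites in the second moment of $N_t$, for which the pairwise hitting bounds above are exactly what is needed.
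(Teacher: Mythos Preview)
Your treatment of $\rl\le\trel\le\rr$ and of the two soft lower bounds $\tsep\ge|V|$, $\tsep\ge\trel$ is correct and essentially matches the paper. The two remaining estimates, however, each contain a genuine gap.

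\textbf{Upper bound.} You correctly isolate the single-site tail $\PP(x\notin\cS_t)$ as the crux, but you do not have the idea that makes it tractable; your heuristic about ``a Chernoff/negative-binomial estimate for a non-independent sum of geometric variables'' is not how the proof goes and would be hard to carry out. The paper's argument (the Lawler--Bramson--Griffeath trick, refined here) converts the problem into a comparison of two sums of \emph{independent} Bernoullis. Once a walk settles at a site $y$, the portion of its trajectory after first hitting $y$ is ignored by the \IDLA\ update; conditionally on everything else this ignored tail $Z_y$ is a fresh $K$-walk from $y$, and the family $(Z_y)_{y\in V}$ is independent. If $x\notin\cS_t$, then every visit to $x$ by $W_1,\ldots,W_t$ must lie in some ignored tail, whence $\{x\notin\cS_t\}\subseteq\{\cN_t(x)\le\cN_\star(x)\}$ with $\cN_t(x)=\sum_{k\le t}{\bf 1}_{\{W_k\text{ hits }x\}}$ Binomial$(t,p(x))$ and $\cN_\star(x)=\sum_{y\ne x}{\bf 1}_{\{Z_y\text{ hits }x\}}$ a sum of independent Bernoullis with mean $\sum_{y\ne x}\bP_y(\tau_x<\tau_\dagger)\le\rr\,p(x)$. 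Two Chernoff bounds then give the Gaussian-in-$\sqrt t$ tail you wrote down, with no dependence to manage.

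\textbf{Coverage lower bound.} Your route through the covering time $C=\inf\{t:R_1\cup\cdots\cup R_t=V\}$ and a second moment on $N_t$ is genuinely different from the paper's, but it does not work in general: the claim ``$\PP(W_t\ne V)>\tfrac12$ for $t<\tfrac{\rl}{5}\log|V|$'' can fail. Take $V=\{1,\ldots,n\}$, $\nu=\delta_1$, and a walk that from $1$ either dies (probability $1-p$) or traverses $1\to2\to\cdots\to n$ deterministically (probability $p$), with $p\in(\tfrac{5}{n\log n},\tfrac{1}{n})$. Then $\rl\approx n$ so $\tfrac{\rl\log n}{5}\approx\tfrac{n\log n}{5}$ is the dominant term in the max, yet $C$ is Geometric$(p)$ and $\PP(C>\tfrac{n\log n}{5})\approx e^{-pn\log n/5}\to 0$; your second-moment argument cannot rescue this because $N_t$ is here $\{0,n-1\}$-valued. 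The paper instead lower-bounds $\EE[\Tfill]$ directly: writing $\Tfill=\sum_{k=0}^{n-1}(T_{k+1}-T_k)$ with $T_{k+1}-T_k$ conditionally Geometric$\bigl(p(\cS_{T_k}^c)\bigr)$, using sub-additivity $p(B)\le|B|\max_{y\in B}p(y)$, and then optimising over an initial set $A$ chosen so that $\max_{y\in A}p(y)$ is the $\lceil n/\log n\rceil$-th smallest value of $p$, one gets $\EE[\Tfill]\ge\rl(\log n-\log\log n-1)$; the passage to $\tsep$ is then $\tsep\ge\EE[\Tfill]/2$, which follows from the sub-multiplicativity of $t\mapsto\PP(\Tfill>t)$.
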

Those lower and upper bounds match in the following two generic situations: for each $n\ge 1$, consider an $\ARW(K_n,\lambda_n,\nu_n)$ chain and write $\trel^{(n)},\tsep^{(n)},\rr^{(n)},\rl^{(n)}$ for the associated statistics. 
\begin{itemize}
\item If  $\rl^{(n)},\rr^{(n)}=\Theta(|V_n|)$, then $\trel^{(n)}=\Theta(|V_n|)$ and  $\tsep^{(n)}=\Theta\left(|V_n|\log |V_n|\right)$.
\item If $\nu_n=\cU(V_n)$ and $\trel^{(n)}=o\left(\frac{|V_n|}{\log |V_n|}\right)$, then $\tsep^{(n)}=|V_n|+o(|V_n|)$.
\end{itemize}
Let us now illustrate those results  by providing explicit  cutoffs on several  concrete geometries.  
\subsection{Examples}

\paragraph{Non-amenable graphs.} Our first application concerns the  setup of Example \ref{ex:graph}. Let $\dG$ be an infinite  graph with bounded degrees, and recall that $\dG$ is called \emph{non-amenable} (see, e.g., \cite{Woess}) if 
\begin{eqnarray*}
\label{def:nonamenable}
\inf\left\{\frac{|\partial V|}{|V|}\colon V\subseteq V_{\dG},\quad 0< |V|<\infty\right\} & > & 0.
\end{eqnarray*}
where  $\partial V$ is the set of vertices in $V_\dG\setminus V$ having a neighbor in $V$. A simple example is the infinite $3-$regular tree. By virtue of Cheeger inequalities, non-amenability translates into the existence of a spectral gap for  the random-walk operator $P_\dG$: there exists  $\gamma_\dG\in(0,1)$ such that
\begin{eqnarray*}
\sum_{x,y\in V_\dG} \deg(x)P_\dG^t(x,y)f(x)f(y) & \le & (1-\gamma_\dG)^t \sum_{x\in V_\dG} \deg(x)f^2(x),
\end{eqnarray*}
for any $t\in\dN$ and any function $f\colon V_{\dG}\to\dR$ such that the right-hand side is finite. Taking $f={\bf 1}_V$ for some finite set $V\subseteq V_{\dG}$, we deduce that the random walk started from  the degree-biased  law  $\nu(x)\propto \deg(x){\bf 1}_V(x)$ and killed upon exiting $V$ satisfies  
\begin{eqnarray*}
\bE_\nu\left[\tau_\dagger\right] & \le &  \frac{1}{\gamma_\dG}.
\end{eqnarray*}
 We may then replace $\nu$ with the uniform law $\cU$ by paying a factor equal to the maximum degree of $\dG$. In view of (\ref{range}) and Theorems \ref{th:cutoff} and \ref{th:tsep}, we deduce the following general result.
\begin{corollary}[Cutoff on non-amenable graphs]\label{co:exp}
Let $\dG=(\mathbb V,\mathbb E)$ be an infinite non-amenable graph with bounded degrees. For each $n\ge 1$, choose an arbitrary $n-$element  subset $V_n\subseteq\mathbb V$, and set $K_n=P_{\dG}[V_n]$, $\nu_n=\cU(V_n)$, and $\lambda_n\in(0,\infty)$. Then, the $\ARW(K_n,\lambda_n,\nu_n)$ chain satisfies
\begin{eqnarray*}
\trel^{(n)} \ = \ \Theta(n), & \qquad & \tsep^{(n)} \ =\ \Theta(n\log n),
\end{eqnarray*}
and there is a separation cutoff with window $O(n\sqrt{\log n})$.
\end{corollary}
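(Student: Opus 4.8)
The plan is to combine the spectral-gap bound recalled just before the statement with the general Theorems~\ref{th:cutoff} and \ref{th:tsep}. Since the graph $\dG$ is fixed and independent of $n$, all that is really required is to show that the two single-particle statistics $\rl^{(n)}$ and $\rr^{(n)}$ attached to the network $(K_n,\nu_n)=(P_{\dG}[V_n],\cU(V_n))$ are both of order $|V_n|=n$; the relaxation-time, mixing-time, cutoff and window statements then follow mechanically.

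First I would identify $\rr^{(n)}$: because $\nu_n=\cU(V_n)$, the remark following Theorem~\ref{th:tsep} gives $\rr^{(n)}=n$ with no work. Next I would bound $\rl^{(n)}$ from above by $n$, using that the expected range $\sum_{x\in V_n}p(x)$ of a $(K_n,\nu_n)$-walk is at least $1$ (the walk visits at least its starting site), so that $\frac1{|V_n|}\sum_x p(x)\ge \frac1{|V_n|}$. For the matching lower bound I would invoke (\ref{range}), namely $\rl^{(n)}\ge |V_n|/\bE_{\cU}[\tau_\dagger]$, together with the non-amenability estimate: Cheeger's inequality provides a spectral gap $\gamma_{\dG}\in(0,1)$ for $P_{\dG}$, whence $\bE_\nu[\tau_\dagger]\le \gamma_{\dG}^{-1}$ for the degree-biased initial law $\nu(x)\propto\deg(x)\mathbf 1_{V_n}(x)$, and swapping $\nu$ for $\cU$ costs only the multiplicative factor $\max_x\deg(x)$, a constant depending on $\dG$ alone. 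This yields $\bE_{\cU}[\tau_\dagger]\le C$ and hence $\rl^{(n)}\ge n/C=\Omega(n)$. Altogether $\rl^{(n)},\rr^{(n)}=\Theta(n)$.

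From here everything drops out. The chain of inequalities $\rl^{(n)}\le\trel^{(n)}\le\rr^{(n)}$ in Theorem~\ref{th:tsep} forces $\trel^{(n)}=\Theta(n)$. The lower bound $\tsep^{(n)}\ge \rl^{(n)}\log n/5$ and the upper bound $\tsep^{(n)}\le(\sqrt{\rr^{(n)}}+3\sqrt{\trel^{(n)}\log n})^2$ of the same theorem then sandwich $\tsep^{(n)}$ between $\Omega(n\log n)$ and $O(n\log n)$, so $\tsep^{(n)}=\Theta(n\log n)$. Consequently $\trel^{(n)}/\tsep^{(n)}=\Theta(1/\log n)\to 0$, the product condition (\ref{prod}) holds, and Theorem~\ref{th:cutoff} yields a separation cutoff; plugging the orders of $\trel^{(n)}$ and $\tsep^{(n)}$ into the window bound $\tsep^{(n)}(\varepsilon)-\tsep^{(n)}(1-\varepsilon)\le \sqrt{8\,\trel^{(n)}\tsep^{(n)}/\varepsilon}$ gives a window of size $O(n\sqrt{\log n})$ for each fixed $\varepsilon$.

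There is no genuinely hard step here: the corollary is pure bookkeeping on top of the general theorems, and uses no connectivity or regularity of the subsets $V_n$ beyond finiteness. The only point that deserves care is the uniformity in $n$ of the constant $C=\gamma_{\dG}^{-1}\max_x\deg(x)$ governing the degree-bias swap, which is precisely where bounded degree of the \emph{fixed} ambient graph $\dG$ is essential.
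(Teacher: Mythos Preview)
Your proposal is correct and follows essentially the same route as the paper: establish $\rr^{(n)}=n$ from $\nu_n=\cU$, use the non-amenability spectral gap and the degree-bias swap to bound $\bE_{\cU}[\tau_\dagger]$ uniformly and hence get $\rl^{(n)}=\Omega(n)$ via (\ref{range}), then feed $\rl^{(n)},\rr^{(n)}=\Theta(n)$ into Theorems~\ref{th:tsep} and \ref{th:cutoff}. The only cosmetic differences are that the paper packages the conclusion via the first bullet point after Theorem~\ref{th:tsep} rather than spelling out the sandwich for $\tsep^{(n)}$, and that the observation $\rr=|V|$ when $\nu=\cU$ actually appears just \emph{before} Theorem~\ref{th:tsep}, not after it.
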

\paragraph{Vertex-transitive graphs with a sink.} Our second application  confirms a conjecture of Levine and Liang \cite[Conjecture 9]{ARWmixing}. Let $G$ be a finite connected graph. Recall that  $G$ is \emph{vertex-transitive} if for any vertices $x,y\in V_G$, there is an edge-preserving bijection $\phi\colon V_G\to V_G$ that maps $x$ to $y$. In words, $G$ ``looks the same from every vertex''.   Under this assumption, various random-walk statistics admit considerably simplified expressions, as recorded by Aldous \cite{AldousHitting}. In particular, Proposition 3 therein states that the random walk on $G$ started from the uniform law satisfies
\begin{eqnarray*}
\PP\left(\tau_x<\tau_z\right)& = & \frac{1}{2},
\end{eqnarray*}
for any $x\ne z\in V_G$. 
Note that  the left-hand side is exactly $\frac{|V_G|-1}{|V_G|}p(x)$, where $p(x)$ is the hitting probability of $x$ by a random walk starting from $\nu=\cU(V_G\setminus\{z\})$ and killed upon hitting $z$. 
\begin{corollary}[Cutoff on transitive graphs with a sink]\label{co:trans}
For each $n\in\dN$, let $G_n$ be a connected vertex-transitive graph on $n+1$ vertices, and let $V_n$ be obtained by removing one vertex. Take $K_n=P_{G_n}[V_n]$, $\nu_n=\cU(V_n)$ and  $\lambda_n\in(0,\infty)$. Then, the $\ARW(K_n,\lambda_n,\nu_n)$ chain satisfies 
\begin{eqnarray*}
\trel^{(n)} \ = \ \frac{2n}{n+1}, & \qquad & \tsep^{(n)}\ = \ n+o(n),
\end{eqnarray*}
and there is a separation cutoff with window $O(\sqrt{n})$.
\end{corollary}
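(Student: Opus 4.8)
The plan is to deduce everything from Theorems~\ref{th:trel}, \ref{th:cutoff} and \ref{th:tsep} by computing the relevant single-particle statistics in the vertex-transitive setting. The key input is the identity recalled just above the statement: Aldous' result that $\PP(\tau_x<\tau_z)=\tfrac12$ for any two distinct vertices $x\ne z$ of a vertex-transitive graph $G_n$, where the walk starts from the uniform law. As the excerpt observes, this left-hand side equals $\tfrac{n}{n+1}p(x)$, where $p$ is the hitting-probability statistic of the killed chain $K_n=P_{G_n}[V_n]$ with $\nu_n=\cU(V_n)$ and killing at the removed vertex $z$. Hence $p(x)=\tfrac{n+1}{2n}$ for \emph{every} $x\in V_n$. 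First I would plug this into Theorem~\ref{th:trel}: since $p$ is constant, $\min_x p(x)=\tfrac{n+1}{2n}$ and therefore $\trel^{(n)}=\bigl(\tfrac{n+1}{2n}\bigr)^{-1}=\tfrac{2n}{n+1}$, which is exactly the claimed value (and is $\Theta(1)$, in fact converges to $2$).

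Next I would compute $\rl^{(n)}$ and $\rr^{(n)}$. Because $p(x)$ does not depend on $x$, the average $\tfrac1{|V_n|}\sum_{x}p(x)$ equals $\tfrac{n+1}{2n}$ as well, so $\rl^{(n)}=\tfrac{2n}{n+1}=\trel^{(n)}$. For $\rr^{(n)}$, note that $\nu_n=\cU(V_n)$, so by the remark following the definition of $\rr$ we simply have $\rr^{(n)}=|V_n|=n$. Now feed these into the two-sided bound of Theorem~\ref{th:tsep}. The lower bound gives $\tsep^{(n)}\ge\max\{\trel^{(n)},|V_n|,\tfrac{\rl^{(n)}\log|V_n|}{5}\}\ge n$. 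The upper bound gives
\[
\tsep^{(n)}\ \le\ \Bigl(\sqrt{\rr^{(n)}}+3\sqrt{\trel^{(n)}\log|V_n|}\Bigr)^2\ =\ \Bigl(\sqrt{n}+3\sqrt{\tfrac{2n}{n+1}\log n}\Bigr)^2\ =\ n+O\!\left(\sqrt{n\log n}\right),
\]
since $\trel^{(n)}=\Theta(1)$ forces the cross term to be $O(\sqrt{n\log n})$ and the second square to be $O(\log n)$. Combining, $\tsep^{(n)}=n+o(n)$, as claimed.

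Finally, for the cutoff and the window: the product condition~(\ref{prod}) holds trivially here, since $\trel^{(n)}=\Theta(1)=o(n)=o(\tsep^{(n)})$, so Theorem~\ref{th:cutoff} immediately yields separation cutoff. For the window size, fix $\varepsilon\in(0,1/2)$ and apply the window bound of Theorem~\ref{th:cutoff}:
\[
\tsep^{(n)}(\varepsilon)-\tsep^{(n)}(1-\varepsilon)\ \le\ \sqrt{\frac{8\,\trel^{(n)}\tsep^{(n)}}{\varepsilon}}\ =\ \sqrt{\frac{8}{\varepsilon}\cdot\Theta(1)\cdot\bigl(n+o(n)\bigr)}\ =\ O(\sqrt{n}),
\]
which gives the advertised window $O(\sqrt n)$.

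I do not expect any serious obstacle: the corollary is essentially a substitution exercise once Aldous' hitting-probability identity is invoked, the only mild care being to track that $\trel^{(n)}=\Theta(1)$ so that all the $\sqrt{\trel\log|V|}$-type error terms are genuinely lower order than $n$, and to make sure the precise constant $\tfrac{2n}{n+1}$ (rather than just its limit $2$) comes out of Theorem~\ref{th:trel}, which it does since $p$ is exactly constant. The one place worth double-checking is the translation ``$\PP(\tau_x<\tau_z)=\tfrac{n}{n+1}p(x)$'': this is because starting the uniform walk on all $n+1$ vertices, conditioning on not starting at $z$ (probability $\tfrac{n}{n+1}$) gives the law $\nu_n=\cU(V_n)$, and on that event $\{\tau_x<\tau_z\}$ is exactly $\{\tau_x<\tau_\dagger\}$ for the killed walk — while starting at $z$ contributes $0$ to $\PP(\tau_x<\tau_z)$ since $x\ne z$.
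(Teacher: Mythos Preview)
Your proposal is correct and follows essentially the same route as the paper: use Aldous' identity to see that $p(x)=\tfrac{n+1}{2n}$ is constant on $V_n$, read off $\trel^{(n)}=\tfrac{2n}{n+1}$ from Theorem~\ref{th:trel}, and then plug $\rr^{(n)}=|V_n|=n$ and $\trel^{(n)}=\Theta(1)$ into Theorems~\ref{th:tsep} and~\ref{th:cutoff} to obtain $\tsep^{(n)}=n+o(n)$ and the $O(\sqrt{n})$ window. Your explicit verification of the translation $\PP(\tau_x<\tau_z)=\tfrac{n}{n+1}\,p(x)$ is also correct.
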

\paragraph{Wheel-like graphs.} Our third application is motivated by the following example of Levine and Liang: start from the $n-$cycle on $V_n=[n]$, and connect all sites to an extra vertex called the \emph{sink}. Write $K_n$ for the transition matrix of  random walk killed upon hitting the sink, and set $\nu=\cU(V_n)$ and $\lambda_n\in(0,\infty)$. Then \cite[Proposition 8]{ARWmixing} states that the  $\ARW(K_n,\nu_n,\lambda_n)$ chain satisfies 
\begin{eqnarray*}
\tsep^{(n)} & = & \Omega\left(\frac{n\log n}{\log\log n}\right).
\end{eqnarray*}
Now, observe that the life-time of a $(K,\nu)-$walk is here a geometric variable with mean $3$,  hence $\rl^{(n)}\ge n/3$. Moreover,  $\rr^{(n)}=n$ because $\nu_n$ is uniform. Thus, our general results imply that in fact,  $\tsep^{(n)}=\Theta(n\log n)$ and  $\trel^{(n)}=\Theta(n)$, and that there is a separation cutoff with window $O(n\sqrt{\log n})$. Moreover, the same argument  applies to any bounded-degree graph instead of the cycle. 
\begin{corollary}[Cutoff on wheel-like graphs]\label{co:wheel} Fix  $d\in\dN$. For each $n\ge 1$, let $G_n$ be a graph on $n+1$ vertices with one vertex of degree $n$ (the ``sink'') and all others of degree at most $d$. Let $V_n$ be the set obtained by removing the sink, and set $K_n=P_{G_n}[V_n]$, $\nu_n=\cU(V_n)$ and $\lambda_n\in(0,\infty)$. Then, 
\begin{eqnarray*}
\trel^{(n)} \ = \ \Theta(n), & \qquad & \tsep^{(n)} \ = \ \Theta(n\log n),
\end{eqnarray*}
and there is a separation cutoff with window $O(n\sqrt{\log n})$. 
\end{corollary}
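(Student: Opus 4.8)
The plan is to deduce Corollary \ref{co:wheel} from the sharp mixing-time estimates of Theorem \ref{th:tsep} together with the window bound of Theorem \ref{th:cutoff}, exactly along the lines already sketched for the cycle-plus-sink example. First I would observe that the only graph-specific input needed is control of the $(K_n,\nu_n)$-walk, where $\nu_n=\cU(V_n)$ and $K_n=P_{G_n}[V_n]$ kills the walk when it steps onto the sink. Since every non-sink vertex has degree at most $d$ and is adjacent to the sink (because the sink has degree $n$, so it touches every other vertex), at each step the walk, currently at some $x\in V_n$, jumps to the sink with probability exactly $\tfrac{1}{\deg_{G_n}(x)}\ge \tfrac1d$. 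Hence $\tau_\dagger$ is stochastically dominated by a geometric variable of success probability $1/d$, giving $\bE_{\nu_n}[\tau_\dagger]\le d$ uniformly in $n$.

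From $\bE_{\nu_n}[\tau_\dagger]\le d$ and (\ref{range}) I get $\rl^{(n)}\ge |V_n|/d = n/d = \Omega(n)$; trivially $\rl^{(n)}\le |V_n|=n$ since each $p(x)\le 1$, so $\rl^{(n)}=\Theta(n)$. Because $\nu_n$ is uniform, the remark after the definition of $\rr$ gives $\rr^{(n)}=|V_n|=n$ exactly. Thus both $\rl^{(n)}$ and $\rr^{(n)}$ are $\Theta(n)=\Theta(|V_n|)$, so the first of the two ``generic situations'' listed right after Theorem \ref{th:tsep} applies verbatim and yields $\trel^{(n)}=\Theta(n)$ and $\tsep^{(n)}=\Theta(n\log n)$. (Equivalently, one may read this straight off the displayed inequalities: $\trel^{(n)}\le \rr^{(n)}=n$ and $\trel^{(n)}\ge\rl^{(n)}=\Omega(n)$ give $\trel^{(n)}=\Theta(n)$; then $\tsep^{(n)}\ge \tfrac15\rl^{(n)}\log|V_n| = \Omega(n\log n)$, while $\tsep^{(n)}\le(\sqrt{\rr^{(n)}}+3\sqrt{\trel^{(n)}\log|V_n|})^2 = O(n\log n)$.)

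For the cutoff statement, the product condition (\ref{prod}) holds since $\trel^{(n)}=\Theta(n)=o(n\log n)=o(\tsep^{(n)})$, so Theorem \ref{th:cutoff} immediately gives separation cutoff. For the window size, plug the orders just obtained into the bound of Theorem \ref{th:cutoff}: for fixed $\varepsilon\in(0,1/2)$,
\begin{eqnarray*}
\tsep^{(n)}(\varepsilon)-\tsep^{(n)}(1-\varepsilon) \ \le\ \sqrt{\frac{8\,\trel^{(n)}\,\tsep^{(n)}}{\varepsilon}} \ = \ O\!\left(\sqrt{n\cdot n\log n}\right) \ = \ O\!\left(n\sqrt{\log n}\right),
\end{eqnarray*}
which is the claimed window.

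There is essentially no obstacle here: the corollary is a pure specialization of the general theorems, and the only thing to check carefully is the uniform geometric domination of $\tau_\dagger$, which hinges on the (trivial) fact that in these ``wheel-like'' graphs the sink is adjacent to every other vertex and every other vertex has bounded degree, so the one-step escape probability to the sink is bounded below by $1/d$. Everything else is bookkeeping with the displayed inequalities of Theorems \ref{th:tsep} and \ref{th:cutoff}, and the argument is manifestly insensitive to replacing the $n$-cycle by any bounded-degree graph on the non-sink vertices.
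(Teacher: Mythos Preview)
Your proof is correct and follows exactly the approach the paper sketches: bound the killed-walk lifetime by a geometric variable (success probability at least $1/d$, since every non-sink vertex is adjacent to the sink and has degree $\le d$), deduce $\rl^{(n)}=\Omega(n)$ via (\ref{range}), use $\rr^{(n)}=n$ from $\nu_n=\cU(V_n)$, and then read everything off Theorems \ref{th:tsep} and \ref{th:cutoff}. One tiny slip: the justification ``$\rl^{(n)}\le n$ since each $p(x)\le 1$'' points the wrong way (that inequality gives $\rl^{(n)}\ge 1$), but the bound $\rl^{(n)}\le n$ is both unnecessary for your argument and immediate from $\rl\le\trel\le\rr=n$ in Theorem \ref{th:tsep}, which you invoke anyway.
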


\paragraph{Discrete Euclidean balls.} Finally, let us revisit the important case of discrete Euclidean balls analyzed by Levine and Liang in \cite{ARWmixing}. Consider the setup of Example \ref{ex:graph} where the ambient graph $\mathbb G$  is the $d-$dimensional Euclidean lattice $\mathbb Z^d$, and where the finite domain $V$ is the  ball
\begin{eqnarray*}
V_n & := & \left\{x\in\dZ^d\colon x_1^2+\cdots+x_d^2 \le n\right\}.
\end{eqnarray*}
Choose $\nu_n:=\cU(V_n)$ and note that the reversibility condition in Remark \ref{rk:rev} holds. Consequently, classical Green's function estimates show  that $\min_{x\in V_n}p(x)\ge \frac{c_d}{g_d(n)}$, where $c_d>0$ is a constant and
\begin{eqnarray*}
g_d(n) & := & \left\{
\begin{array}{ll}
1 & \textrm{if }d=1;\\
n \log n & \textrm{if }d= 2;\\
n^{d-1} & \textrm{if }d\ge 3.
\end{array}
\right.
\end{eqnarray*}
Thus, the  $\ARW(V_n,\lambda_n,\nu_n)$  chain satisfies $\trel^{(n)}=O\left(g_d(n)\right)$. Keeping in mind that  $|V_n|=\Theta(n^{d})$, we deduce that $\trel^{(n)}=o\left(\frac{|V_n|}{\log(V_n)}\right)$ in all dimensions, hence the following result.
\begin{corollary}[Cutoff on Euclidean balls]The  $\ARW(V_n,\lambda_n,\nu_n)$  chain described above satisfies
\begin{eqnarray*}
\trel^{(n)} \ = \ O\left(g_d(n)\right), & \quad & \tsep^{(n)}=|V_n|+o(|V_n|), 
\end{eqnarray*}
and there is a separation cutoff with window $O(\sqrt{n^{d}g_d(n)}).$
\end{corollary}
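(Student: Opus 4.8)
The plan is to derive the Euclidean-ball corollary as a direct specialization of Theorems \ref{th:cutoff} and \ref{th:tsep}, feeding them the single-particle Green's function estimates already quoted. First I would recall that by the detailed-balance identity of Remark \ref{rk:rev} (valid here because $K_n=P_{\dZ^d}[V_n]$ and $\nu_n=\cU(V_n)$), the hitting probabilities satisfy $p(x)=\frac{\bE_x[\tau_\dagger]}{|V_n|\,\cG(x,x)}$. Classical Green's function estimates for killed random walk in a ball of radius $\sqrt n$ bound $\cG(x,x)$ from above by a dimension-dependent constant times $g_d(n)/|V_n|$ uniformly in $x\in V_n$, while $\bE_x[\tau_\dagger]$ is trivially at least $1$; this yields $\min_{x\in V_n}p(x)\ge c_d/g_d(n)$, hence $\trel^{(n)}=(\min_x p(x))^{-1}=O(g_d(n))$ by Theorem \ref{th:trel}.

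Next I would check the hypothesis of the second ``generic situation'' following Theorem \ref{th:tsep}: here $\nu_n=\cU(V_n)$ by construction, and since $|V_n|=\Theta(n^d)$ while $g_d(n)$ is $1$, $n\log n$, or $n^{d-1}$, one verifies $g_d(n)=o(|V_n|/\log|V_n|)$ in every dimension $d\ge 1$ (the $d=2$ case $n\log n=o(n^2/\log n)$ and the $d\ge 3$ case $n^{d-1}=o(n^d/\log n)$ are immediate, and $d=1$ is trivial). Therefore $\trel^{(n)}=o(|V_n|/\log|V_n|)$, and the quoted consequence of Theorem \ref{th:tsep} gives $\tsep^{(n)}=|V_n|+o(|V_n|)$. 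In particular $\trel^{(n)}=o(\tsep^{(n)})$, so the product condition \eqref{prod} holds and Theorem \ref{th:cutoff} yields a separation cutoff.

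Finally, for the window bound I would plug $\trel^{(n)}=O(g_d(n))$ and $\tsep^{(n)}=\Theta(|V_n|)=\Theta(n^d)$ into the inequality $\tsep(\varepsilon)-\tsep(1-\varepsilon)\le\sqrt{8\trel\tsep/\varepsilon}$ of Theorem \ref{th:cutoff}, obtaining a window of size $O(\sqrt{g_d(n)\,n^d})$, which is the claimed $O(\sqrt{n^d g_d(n)})$. I expect essentially no obstacle here, since all the analytic work is outsourced: the only point requiring a little care is the uniform-in-$x$ Green's function bound $\cG(x,x)\le C_d\, g_d(n)/|V_n|$, which is where the factor $g_d(n)$ — logarithmic in $d=2$, polynomial of degree $d-1$ in $d\ge 3$, and bounded in $d=1$ — genuinely enters, and one should make sure the estimate is stated for the walk killed on exiting $V_n$ rather than for the free walk on $\dZ^d$ (the two agree up to constants because a ball of radius $\sqrt n$ has diameter comparable to its inradius).
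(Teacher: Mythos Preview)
Your overall route matches the paper's: feed the estimate $\min_x p(x)\ge c_d/g_d(n)$ into Theorem~\ref{th:trel} to obtain $\trel^{(n)}=O(g_d(n))$, check $\trel^{(n)}=o(|V_n|/\log|V_n|)$, invoke the second bullet following Theorem~\ref{th:tsep} to conclude $\tsep^{(n)}=|V_n|+o(|V_n|)$, and finally read off cutoff and the window size from Theorem~\ref{th:cutoff}. Those last three steps are correct as written.

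The gap is in your derivation of $\min_x p(x)\ge c_d/g_d(n)$. You claim $\cG(x,x)\le C_d\,g_d(n)/|V_n|$ and pair it with the trivial bound $\bE_x[\tau_\dagger]\ge 1$, but that Green's function inequality is false: one always has $\cG(x,x)\ge 1$ (the walk is at $x$ at time $0$), whereas $g_1(n)/|V_n|\to 0$ already in dimension~$1$, and the same happens in every dimension under the paper's normalization $|V_n|=\Theta(n^d)$. The actual mechanism is that \emph{both} factors in $p(x)=\bE_x[\tau_\dagger]\big/\big(|V_n|\,\cG(x,x)\big)$ must be estimated. The classical diagonal Green bounds are $\cG(x,x)=O(r)$, $O(\log r)$, $O(1)$ in dimensions $d=1$, $2$, $\ge 3$ respectively (with $r$ the ball radius), and these have to be combined with a \emph{non-trivial} lower bound on the expected exit time --- for $x$ at distance $\ell$ from the boundary one has $\bE_x[\tau_\dagger]\asymp \ell\,r$, not merely $\ge 1$. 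It is the ratio $\bE_x[\tau_\dagger]/\cG(x,x)$ that stays bounded below by $c\,|V_n|/g_d(n)$ uniformly in $x$; neither factor alone does the job. The paper does not spell this out, citing only ``classical Green's function estimates'', but if you unpack it you need both ingredients rather than the trivial $\bE_x[\tau_\dagger]\ge 1$.
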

\subsection{Implications for total-variation distance}
\label{sec:tv}
 Let us finally discuss what our results imply for the more standard \emph{total-variation mixing time}:
\begin{eqnarray*}
\tmix(\varepsilon) \ := \ \min\left\{t\in\dN\colon \dtv(t)\le \varepsilon\right\},& \textrm{ where } &  \dtv(t)\ :=\ \max_{\eta\in \cX,A\subseteq\cX}\left|P^t(\eta,A)-\pi(A)\right|.
\end{eqnarray*}
We first note that the asymptotic behavior (\ref{decayrate}) remains valid in total variation, so that Theorem \ref{th:trel} also characterizes the asymptotic behavior of the function $t\mapsto \dtv(t)$. Moreover, we always have $\dtv(t)\le \dd(t)$  (see, e.g., \cite[Lemma 6.16]{MixingBook}), so the upper bound in Theorem \ref{th:tsep} also applies to $\tmix$.  Although the converse relation fails for non-reversible chains,  the lower bound $\tmix\ge \trel$ remains valid, as it  follows from the sub-multiplicativity of $t\mapsto 2\dd(t)$. Let us sum this up.
\begin{corollary}[Total-variation mixing time]For \ARW chains we always have
\begin{eqnarray*}
\trel\ \le & \tmix & \le \ \left(\sqrt{\rr}+3\sqrt{\trel\log |V|}\right)^2.
\end{eqnarray*} 
\end{corollary}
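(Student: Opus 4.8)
This corollary merely repackages Theorems~\ref{th:trel} and~\ref{th:tsep}, and the plan is simply to track what survives the passage from separation to total variation. For the upper bound I would use that total variation is dominated by separation, $\dtv(t)\le\dd(t)$ for every $t\in\dN$ (see \cite[Lemma~6.16]{MixingBook}); hence $\{t:\dd(t)\le\varepsilon\}\subseteq\{t:\dtv(t)\le\varepsilon\}$, so $\tmix(\varepsilon)\le\tsep(\varepsilon)$ for every $\varepsilon$, and in particular $\tmix=\tmix(1/2)\le\tsep(1/2)=\tsep$. Feeding in the upper bound of Theorem~\ref{th:tsep} then yields $\tmix\le(\sqrt{\rr}+3\sqrt{\trel\log|V|})^2$ with no further computation.

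For the lower bound $\tmix\ge\trel$, reversibility is unavailable, so — as remarked above — one cannot simply restrict the inequality $\tsep\ge\trel$ of Theorem~\ref{th:tsep}: for non-reversible chains the separation mixing time may exceed the total-variation mixing time (by up to a factor $\log|\cX|$), and the usual two-sided comparison between $\dtv$ and $\dd$ breaks down. The route I would take is spectral. The Gelfand-type reasoning behind~(\ref{decayrate}) carries over verbatim to total variation: any eigenvalue $z\in\mathbb C\setminus\{1\}$ of $P$ has a (possibly complex) eigenfunction $f$, and from $z\,\pi(f)=\pi(Pf)=\pi(f)$ one gets $\pi(f)=0$, so evaluating $P^tf=z^tf$ at a site where $|f|$ is maximal gives $|z|^t\le 2\,\dtv(t)$; taking $|z|=\rho$ shows $\dtv(t)^{1/t}\to\rho$, matching~(\ref{decayrate}). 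Combining this with the sub-multiplicativity of $t\mapsto 2\,\dd(t)$ (Lemma~\ref{lm:submult}), which by Fekete's lemma forces $\dd(t)\ge\rho^t$ and thereby pins $\rho$ as the exact decay rate on both sides, one recovers $\tmix\ge\trel$ along the lines of \cite[Chapter~12]{MixingBook}. A more hands-on alternative would be to re-use the witness behind the lower bound of Theorem~\ref{th:tsep}: started from the empty configuration, the \ARW\ chain disturbs the least-connected site $x^\star$ (one minimizing $p(x)$) with probability at most $p(x^\star)=\trel^{-1}$ at each step, so — with the appropriate care in choosing the distinguishing set — its law stays at total-variation distance bounded away from $\pi$ for $\Omega(\trel)$ steps.

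The main obstacle is therefore not located in this corollary at all: the substance lies entirely in Theorems~\ref{th:trel} and~\ref{th:tsep}, whose proofs rest on the refined analysis of the \IDLA-based strong stationary time. The only point inside the corollary that calls for a moment's care is obtaining the lower bound for total variation rather than merely for separation in the absence of reversibility, and this is settled by the spectral and sub-multiplicativity argument above, which appeals to no structural feature of the \ARW\ dynamics.
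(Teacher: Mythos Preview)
Your proposal is correct and follows the same route as the paper: the upper bound comes from $\dtv\le\dd$ together with Theorem~\ref{th:tsep}, and the lower bound $\tmix\ge\trel$ from the spectral decay rate~(\ref{decayrate}) transferred to total variation via sub-multiplicativity. The paper's own justification, in the paragraph preceding the corollary, is in fact terser than yours---it simply cites the sub-multiplicativity of $t\mapsto 2\dd(t)$---so your explicit eigenfunction argument is a slight expansion of what the authors wrote rather than a different approach.
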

Regarding total-variation cutoff, we recall that the latter implies  $\trel^{(n)}=o(\tmix^{(n)})$, which in turns implies $\trel^{(n)}=o(\tsep^{(n)})$ and hence separation cutoff, by our Theorem \ref{th:cutoff}. This result is non-trivial, in the sense that it does not hold for general chains \cite{MR3530321}. We thus record it here. 
\begin{corollary}[Total-variation cutoff implies separation cutoff]If a sequence of $\ARW$ chains exhibits total-variation cutoff, then it also exhibits separation cutoff. 
\end{corollary}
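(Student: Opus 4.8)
The corollary follows by chaining three facts, only the last of which carries real content — namely the equivalence ``product condition $\Leftrightarrow$ separation cutoff'' of Theorem \ref{th:cutoff}. Assume the sequence of $\ARW(K_n,\lambda_n,\nu_n)$ chains exhibits total-variation cutoff. The plan is: (i) deduce the total-variation product condition $\trel^{(n)}=o(\tmix^{(n)})$; (ii) observe that $\tmix^{(n)}\le\tsep^{(n)}$; (iii) combine these into $\trel^{(n)}=o(\tsep^{(n)})$, i.e.\ (\ref{prod}), and invoke Theorem \ref{th:cutoff}.

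For step (i) I would simply quote the classical fact that the product condition is necessary for cutoff — in its total-variation form and for arbitrary irreducible chains (see e.g.\ \cite{MixingBook}). The intuition is standard: by Gelfand's formula (\ref{decayrate}), which as noted is valid in total variation, $\dtv^{(n)}(t)$ cannot decay, near equilibrium, faster than the geometric rate $\rho^{(n)}=1-1/\trel^{(n)}$; hence moving from distance $\varepsilon$ down to distance $\varepsilon/2$ near equilibrium costs of order $\trel^{(n)}$ steps, so if $\trel^{(n)}$ were comparable to $\tmix^{(n)}$ along a subsequence the mixing window $\tmix^{(n)}(\varepsilon/2)-\tmix^{(n)}(\varepsilon)$ could not be $o(\tmix^{(n)})$, contradicting cutoff. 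I would not reprove this.

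Step (ii) is immediate from the pointwise domination $\dtv(t)\le\dd(t)$ (e.g.\ \cite[Lemma 6.16]{MixingBook}): evaluating at the threshold $\varepsilon=1/2$ gives $\tmix^{(n)}\le\tsep^{(n)}$. Then, since $\trel^{(n)}=o(\tmix^{(n)})$ and $\tmix^{(n)}\le\tsep^{(n)}$, we get $\trel^{(n)}=o(\tsep^{(n)})$, which is precisely the product condition (\ref{prod}); Theorem \ref{th:cutoff} converts this into a separation cutoff, finishing the argument. There is no genuine obstacle beyond Theorem \ref{th:cutoff} — all that is left is bookkeeping. The one point worth stressing is that the implication is false for general Markov chains: total-variation cutoff does \emph{not} force separation cutoff in general \cite{MR3530321}, so this corollary is a real consequence of the $\ARW$-specific equivalence in Theorem \ref{th:cutoff} rather than a soft generality.
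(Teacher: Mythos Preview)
Your proposal is correct and follows exactly the paper's own argument: total-variation cutoff gives $\trel^{(n)}=o(\tmix^{(n)})$, the inequality $\dtv\le\dd$ yields $\tmix^{(n)}\le\tsep^{(n)}$, hence $\trel^{(n)}=o(\tsep^{(n)})$, and Theorem~\ref{th:cutoff} finishes. There is nothing to add.
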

Finally, we  note that that the total-variation and separation distances become equivalent if the sleeping rate is large enough. Indeed, it is not hard to see that
\begin{eqnarray*}
\sup_{t\in\dN}\left|1-\frac{\dtv(t)}{\dd(t)}\right| & \le & \frac{|V|}{1+\lambda}.
\end{eqnarray*} 
\begin{corollary}[Deep-sleep regime]
In the  regime  $\lambda_n\gg |V_n|$, the various  examples of separation cutoffs presented above extend to total-variation cutoffs, with the same locations and windows.
\end{corollary}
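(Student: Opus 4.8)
The final statement to prove is the inequality
\[
\sup_{t\in\dN}\left|1-\frac{\dtv(t)}{\dd(t)}\right| \ \le \ \frac{|V|}{1+\lambda},
\]
from which the Deep-sleep regime corollary follows immediately (if $\lambda_n\gg|V_n|$, the right-hand side tends to $0$, so $\dtv^{(n)}(t)=(1+o(1))\dd^{(n)}(t)$ uniformly in $t$, and every separation cutoff statement transfers verbatim to total variation). So the real content is the displayed bound.

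\medskip
\noindent\textbf{Proof proposal for the bound.} The plan is to compare the two distances by bounding $\dd(t)-\dtv(t)$ from above. Recall that $\dtv(t)=\max_{\eta}\,\dtv(P^t(\eta,\cdot),\pi)$ and $\dd(t)=\max_{\eta}\max_{\eta'}\bigl(1-P^t(\eta,\eta')/\pi(\eta')\bigr)$; moreover it is classical (e.g.\ \cite[Lemma 6.16, Remark 6.17]{MixingBook}) that for any fixed starting state $\eta$ one has $\dtv(P^t(\eta,\cdot),\pi)\le \max_{\eta'}\bigl(1-P^t(\eta,\eta')/\pi(\eta')\bigr)$, with equality when the chain has a monotone coupling / when the minimizing $\eta'$ is a ``worst state'' reachable along the dynamics. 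The first step is therefore to identify this worst state: I expect that the separation distance is attained at $\eta'=\mathbf 1$, the fully-occupied configuration (the hardest state to reach, since producing a sleeping particle everywhere requires the IDLA cluster to cover $V$), and more precisely that $\dd(t)=1-P^t(\eta^\star,\mathbf 1)/\pi(\mathbf 1)$ for the worst starting state $\eta^\star$ — a fact that should already be extracted from the strong-stationary-time analysis underlying Theorems~\ref{th:trel}–\ref{th:tsep}. Once $\mathbf 1$ is pinned down as the extremal state, the gap $\dd(t)-\dtv(t)$ is controlled by how much mass $P^t(\eta,\cdot)$ puts on configurations \emph{other} than $\mathbf 1$ that are still ``deficient'' — i.e.\ by $\sum_{\eta'\ne\mathbf 1}\bigl(\pi(\eta')-P^t(\eta,\eta')\bigr)_+$ relative to the deficiency at $\mathbf 1$.

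\medskip
The key quantitative input is the role of $\lambda$. When the sleeping rate $\lambda$ is large, an active particle that finds itself alone on a site falls asleep almost immediately, before it can move; conversely the only way a configuration fails to be $\mathbf 1$ after stabilization is that some particle was killed at the boundary before settling. A clean way to capture this: condition on the IDLA/stabilization dynamics and observe that, at each insertion step, the probability that the newly-inserted active particle (and every particle it wakes) eventually ends up asleep rather than killed is at least $1-\text{(something)}/(1+\lambda)$ — because each active particle, each time it is alone on a site, independently chooses to fall asleep (rate $\lambda$) versus take a step (rate $\le 1$, killed with the residual rate), so over at most $|V|$ distinct relevant sites the total probability of ``escaping to the boundary instead of sleeping somewhere'' is at most $|V|/(1+\lambda)$ by a union bound over sites. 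This gives, for every starting configuration $\eta$ and every $t$, the one-step-transferable bound
\[
\sum_{\eta'\ne\mathbf 1}P^t(\eta,\eta') \ \le \ \frac{|V|}{1+\lambda}\qquad\text{(in a stochastic-domination sense)},
\]
or more precisely a bound on the discrepancy between the $\lambda$-dynamics and its $\lambda=\infty$ idealization where every inserted particle sleeps on the spot. Feeding this into the difference $\dd(t)-\dtv(t)=\sum_{\eta'\ne\mathbf 1}\bigl(\pi(\eta')-P^t(\eta^\star,\eta')\bigr)_+\big/\pi(\mathbf 1)$ and bounding the numerator by $\sum_{\eta'\ne\mathbf 1}\pi(\eta')+\sum_{\eta'\ne\mathbf1}P^t(\eta^\star,\eta')\le 2|V|/(1+\lambda)$ — using the same bound for $\pi$ since $\pi$ is itself a mixture of the $P^t(\eta,\cdot)$ — and dividing by $\pi(\mathbf 1)\ge 1-|V|/(1+\lambda)$, yields $\dd(t)-\dtv(t)=O(|V|/(1+\lambda))$, which after tracking constants gives exactly $\dd(t)\le\dtv(t)+\dd(t)\cdot|V|/(1+\lambda)$, i.e.\ the claimed inequality. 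Combined with $\dtv(t)\le\dd(t)$ this pins $\dtv(t)/\dd(t)$ to the interval $[1-|V|/(1+\lambda),\,1]$.

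\medskip
\noindent\textbf{Main obstacle.} The delicate point is making the ``each alone-particle independently sleeps with the right probability'' heuristic rigorous on the correct event: in the full \ARW\ dynamics, particles interact (a sleeping particle is re-activated by a later insertion), so one cannot literally decouple the sites. The fix I would pursue is to couple the $\ARW(K,\lambda,\nu)$ chain with the $\lambda=\infty$ chain (where step~2 of the dynamics is ``the single inserted particle instantly sleeps at its insertion site, waking no one'') via the abelian property of IDLA/stabilization used by Levine–Liang: stabilize particle-by-particle, and for each particle follow its $K$-walk, declaring ``sleep here'' the first time it is alone on a site that is currently $\s$ or will stay empty. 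The probability that a given particle's trajectory disagrees with the $\lambda=\infty$ outcome is then bounded, by the strong Markov property applied at each arrival to a fresh alone-site, by $1-\lambda/(1+\lambda)=1/(1+\lambda)$ per site, hence $\le |V|/(1+\lambda)$ total; summing (or rather max-ing, via submultiplicativity of $\dd$) over configurations transfers this to the distance bound. I expect essentially all the work to be in this coupling/abelian-property step; the final arithmetic assembling $\dd(t)\le\dtv(t)+\frac{|V|}{1+\lambda}\dd(t)$ is routine.
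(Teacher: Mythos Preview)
Your instinct is right: the whole inequality boils down to the single estimate $1-\pi({\rm\bf s})\le |V|/(1+\lambda)$ (in the paper's notation, the all-sleeping configuration is ${\rm\bf s}$, not ${\bf 1}$; the symbol ${\bf 1}$ is reserved for the all-\emph{active} configuration). But the route you take to get there contains two genuine errors.

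First, the displayed claim
\[
\sum_{\eta'\ne{\rm\bf s}}P^t(\eta,\eta')\ \le\ \frac{|V|}{1+\lambda}\qquad\text{for every }\eta,t
\]
is simply false: take $\eta={\bf 0}$ and $t=0$ (or any $t<|V|$) to get left-hand side equal to $1$. Your justification ``since $\pi$ is itself a mixture of the $P^t(\eta,\cdot)$'' goes the wrong way --- it lets you bound $\pi$ by the $P^t$'s, not conversely. Second, the identity you write for $\dd(t)-\dtv(t)$ is not correct; a short computation from $\dd(t)=\bigl(\pi({\rm\bf s})-P^t({\bf 0},{\rm\bf s})\bigr)/\pi({\rm\bf s})$ shows the difference is not that sum.

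The good news is that none of that machinery is needed. The paper's Proposition~\ref{pr:reduction} already gives you the exact identity $\dd(t)=1-P^t({\bf 0},{\rm\bf s})/\pi({\rm\bf s})$, so testing total variation on the single event $\{{\rm\bf s}\}$ yields
\[
\dtv(t)\ \ge\ \pi({\rm\bf s})-P^t({\bf 0},{\rm\bf s})\ =\ \pi({\rm\bf s})\,\dd(t),
\]
and hence $0\le 1-\dtv(t)/\dd(t)\le 1-\pi({\rm\bf s})$. For the remaining bound, recall from the proof of Proposition~\ref{pr:reduction} (the statement labelled (\ref{SST})) that $\pi$ is exactly the law of the stage-(ii) stabilization of the configuration ${\bf 1}$. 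Starting from ${\bf 1}$, every site holds a single active particle; if each of them chooses ``sleep'' at its first clock ring (probability $\lambda/(1+\lambda)$ each, independently), the outcome is ${\rm\bf s}$. A union bound gives $\pi({\rm\bf s})\ge 1-|V|/(1+\lambda)$, and the inequality follows. No coupling with the $\lambda=\infty$ dynamics over all times, and no control of $P^t(\eta,\cdot)$ away from ${\rm\bf s}$, is required.
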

Of course, establishing total-variation cutoff for a fixed sleeping rate $\lambda\in(0,\infty)$ in any of the examples presented above remains a challenging open problem, and we once again refer the interested reader to the fascinating predictions formulated in \cite{ARWmixing,Conj}.

 \section{Proofs}
 
The remainder of the paper is organized as follows. In Section \ref{sec:reduc}, we build on the  work of Levine and Liang \cite{ARWmixing} to reduce the study of Activated Random Walks to that of  Internal Diffusion Limited Aggregation. In Section \ref{sec:coupling}, we construct a Markovian grand coupling of the latter with two crucial properties:  monotonicity, and  a kind of  concavity. Finally we use those properties to establish Theorems \ref{th:trel}, \ref{th:cutoff} and \ref{th:tsep} in Section \ref{sec:trel}, \ref{sec:cutoff} and \ref{sec:tsep} respectively. 
 \subsection{Reduction to IDLA}
\label{sec:reduc}
The starting point of our analysis is a beautiful connection, recently uncovered by Levine and Liang \cite{ARWmixing}, between the mixing properties of the $\ARW(K,\lambda,\nu)$ chain and the filling time of a classical growth model known as  \emph{Internal Diffusion Limited Aggregation} (\IDLA) \cite{MR1218674}. The latter may be thought of as the $\lambda\to \infty$ limit of the $\ARW$ chain, where active particles instantaneously fall asleep when alone on a site. More formally, the $\IDLA(K,\nu)$ process is the discrete-time Markov chain  $(\cS_t)_{t\ge 0}$ on subsets of $V$  constructed as follows: let $W_1,W_2,\ldots$ be a  sequence of i.i.d. $(K,\nu)-$walks, and let  $\cS_0\subseteq\cS_1\subseteq\cS_2 \ldots$ be defined inductively by $\cS_0:=\emptyset$ and 
$\cS_{t}:=f\left(\cS_{t-1},W_t\right)$, where 
\begin{eqnarray}
\label{def:IDLA}
f\left(S,(x_0,\ldots,x_r)\right) & := &
\left\{ 
\begin{array}{ll}
S \cup\{x_k\} & \textrm{if }k=\min\{i\le r\colon x_i\notin S\}\textrm{ exists;}\\
S & \textrm{otherwise.}
\end{array}
\right.
\end{eqnarray}
In words, particles enter the system one after the other and, upon arrival, each performs a $(K,\nu)-$walk until it dies or hits a previously unoccupied site, where it settles forever. The main quantity of interest for us will be the so-called \emph{filling time}
\begin{eqnarray*}
\Tfill & := & \min\{t\in\dN\colon \cS_t=V\},
\end{eqnarray*}
which is almost-surely finite by our running assumptions on $(K,\nu)$. It turns out that the tail of $\Tfill$ controls the distance to equilibrium of the  \ARW\ chain. More precisely, Levine and Liang \cite{ARWmixing} observed that under a natural coupling due to Shellef  \cite{Coupling}, the filling time of $\IDLA(K,\nu)$ is actually a \emph{strong stationary time} for the $\ARW(K,\lambda,\nu)$ chain, implying (see \cite[Chapter 6]{MixingBook})  the bound 
 \begin{eqnarray*}
\forall t\in\dN,\qquad \dd(t) & \le & \PP\left(\Tfill>t\right).
\end{eqnarray*}
Our only contribution in this section is the  observation that the strong stationary time $T$ is in fact \emph{optimal}, in the sense that the above inequality is  an equality. 
\begin{prop}[\ARW\ vs \IDLA]\label{pr:reduction}For any choice of the parameters $(K,\lambda,\nu)$, we have
\begin{eqnarray*}
\forall t\in\dN,\qquad \dd(t) & = & \PP\left(\Tfill>t\right).
\end{eqnarray*}
\end{prop}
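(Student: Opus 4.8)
The plan is to establish the matching lower bound $\dd(t)\ge \PP(\Tfill>t)$, since the reverse inequality is already known via Levine--Liang's strong stationary time argument. Recall that for a \emph{fixed} strong stationary time $\Tfill$ realized through some coupling, one generally only gets $\dd(t)\le\PP(\Tfill>t)$; equality is the statement that the particular coupling here is sharp. I would attack this by unwinding the definition of separation distance: I need to exhibit a pair $(\eta,\eta')$ — in fact it suffices to identify the worst-case \emph{target} state $\eta'$ and a worst-case \emph{source} state $\eta$ — for which $1-\frac{P^t(\eta,\eta')}{\pi(\eta')}$ equals $\PP(\Tfill>t)$, or at least is bounded below by it. The natural candidate for $\eta'$ is the all-sleeping configuration $\mathbf 1_V\equiv\s$ on every site (the "full" configuration), and the natural candidate source $\eta$ is the all-empty configuration $\mathbf 0$; these are the extreme points of $\cX=\{0,\s\}^V$ and are exactly the states that the IDLA coupling is built to connect.

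The key technical step is to understand, under Shellef's coupling, precisely which ARW trajectories land the chain (started from $\eta=\mathbf 0$) in the full configuration $\eta'=\mathbf 1_V$ after $t$ insertions. Here I expect the relevant structural fact to be: running the ARW chain from the empty configuration for $t$ steps, one reaches the full configuration if and only if $\cS_t=V$ in the coupled IDLA process, i.e. the event $\{\Tfill\le t\}$; moreover on this event the resulting sleeping configuration is \emph{deterministically} $\mathbf 1_V$ — there is simply no room for any site to be empty once $V$ has been filled and everyone has fallen asleep. This would give $P^t(\mathbf 0,\mathbf 1_V)=\PP(\Tfill\le t)$. To then conclude that $\pi(\mathbf 1_V)=1$ in the relevant ratio — which cannot be literally true — I instead need the companion fact that $\pi(\eta')$ is \emph{maximized over states reachable from $\mathbf 0$ via the coupling hitting the full state}, or more precisely that $\dd(t)$ is governed by exactly this pair. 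The cleaner route: for the strong stationary time $\Tfill$ built from this coupling, the \emph{separation-optimality} criterion from \cite{MixingBook} (Chapter 6) states that a strong stationary time $\Tfill$ achieves $\dd(t)=\PP(\Tfill>t)$ for all $t$ precisely when, from the worst-case start, the conditional distribution of the chain given $\{\Tfill=s\}$ is $\pi$ and the "halting state" structure holds — i.e. $\Tfill$ is a \emph{halting time} for some state. I would verify that $\mathbf 1_V$ is a halting state: from the empty configuration, $\Tfill$ equals the first time the chain hits $\mathbf 1_V$, and the chain cannot reach $\mathbf 1_V$ before $\Tfill$ nor fail to be at $\mathbf 1_V$ at time $\Tfill$.

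Concretely, the steps in order: (1) recall from \cite{MixingBook} that if $\Tfill$ is a strong stationary time and there is a state $\eta'$ with $\PP_{\eta_0}(\Tfill\le t)\le \PP_{\eta_0}(\text{chain at }\eta'\text{ at time }t)$ for the worst start $\eta_0$ — equivalently $\Tfill$ is a halting time for $\eta'$ from $\eta_0$ — then $\dd(t)=\PP(\Tfill>t)$; (2) take $\eta_0=\mathbf 0$, $\eta'=\mathbf 1_V$; (3) using Shellef's coupling, argue that started from $\mathbf 0$, the ARW chain is at $\mathbf 1_V$ at step $t$ \emph{exactly on} $\{\Tfill\le t\}$, because filling $V$ in IDLA corresponds to every site becoming occupied in ARW, and once full the only stable configuration is all-sleeping, while before filling at least one site is empty; (4) conclude $\Tfill$ is a halting time and invoke (1). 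The main obstacle I anticipate is step (3): making rigorous the correspondence "ARW from $\mathbf 0$ sits at $\mathbf 1_V$ at time $t$ iff $\cS_t=V$" requires carefully tracking Shellef's coupling between ARW stabilization and IDLA settling, in particular checking that no ARW trajectory from the empty start can produce the full configuration "early" through some atypical sequence of activations and re-sleepings — one must rule out the full configuration being reached at a time strictly less than $\Tfill$, which is where the monotonicity built into the coupling (and the fact that IDLA's occupied set is a lower bound for ARW's occupied set under this coupling) does the work.
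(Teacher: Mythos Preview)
Your high-level strategy --- show that the all-sleeping configuration $\mathbf s$ (your $\mathbf 1_V$) is a \emph{halting state} for $T$ when the chain starts from $\mathbf 0$ --- is exactly what the paper does, though the paper carries it out directly without invoking the halting-state terminology. The paper's proof shows $P^t(\eta,\eta')\ge \PP_{\mathbf 0}(\zeta_t=\mathbf 1)\pi(\eta')$ for all $\eta,\eta'$, and then observes that equality holds at $(\eta,\eta')=(\mathbf 0,\mathbf s)$; unpacking this, it is precisely the statement that $\{\xi_t=\mathbf s\}\subseteq\{\zeta_t=\mathbf 1\}=\{T\le t\}$ from the empty start.

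Where your proposal goes wrong is in the content of step (3). The ``exactly on'' claim is both unnecessary and false: you only need the direction $\{X_t=\mathbf s\}\subseteq\{T\le t\}$. The converse fails because on $\{T\le t\}$ the chain at time $t$ has law $\pi$ (that \emph{is} the strong stationary time property), not $\delta_{\mathbf s}$. Your sentences ``once full the only stable configuration is all-sleeping'' and ``nor fail to be at $\mathbf 1_V$ at time $T$'' conflate IDLA stabilization (the $\lambda=\infty$ case) with ARW stabilization at finite $\lambda$; at time $T$ the chain is a $\pi$-sample, not deterministically $\mathbf s$.

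More seriously, your proposed mechanism for the needed direction --- ``IDLA's occupied set is a lower bound for ARW's occupied set under this coupling'' --- is false in both directions. In the paper's two-stage coupling started from $\mathbf 0$, Stage~(i) yields $\zeta_t$ with $\mathrm{supp}(\zeta_t)=\cS_t$, and Stage~(ii) stabilizes $\zeta_t$ to $\xi_t$; during Stage~(ii) a lone active particle can walk off its site (so $\cS_t\not\subseteq\mathrm{supp}(\xi_t)$) and can settle outside $\cS_t$ (so $\mathrm{supp}(\xi_t)\not\subseteq\cS_t$). The correct argument is a \emph{particle count}, not a support comparison: from $\mathbf 0$, no particle can fall asleep in Stage~(i) (selected particles are never alone), so $\zeta_t\in\{0,1\}^V$ carries exactly $|\cS_t|$ particles; Stage~(ii) cannot create particles; hence $\xi_t=\mathbf s$ forces $|V|\le|\cS_t|$, i.e.\ $\cS_t=V$. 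That is the paper's one-line justification, and it is what is missing from your sketch.
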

\begin{proof}We only briefly revisit the coupling argument used by Levine and Lieng, and refer to \cite{ARWmixing} for more details. We will denote by $\bf 0$ (resp. $\bf 1$, resp. $\rm\bf s$) the configuration in which all entries are equal to $0$ (resp. $1$, resp. $\s$). An elementary but crucial property of \ARW\ stabilization is that it can be performed sequentially, by selecting an arbitrary active particle at each step and letting it execute a random transition (moving, dying, or falling sleep) with the appropriate distribution, until no active particle remains. The key point is that the rule used for selecting active particles is irrelevant: this is the so-called \emph{Abelian property} (see \cite{ARWRoSi,Abelian,ARWmixing} for details).   In particular, in order to stabilize a configuration $\eta\in\{0,1,\s,2,3,\ldots\}^V$, we may proceed in two stages as follows:
\begin{enumerate}[(i)]
\item Only select particles which are not alone on their sites,  until some $\zeta\in\{0,\s,1\}^V$ is reached. 
\item Stabilize $\zeta$ to produce the desired stable configuration $\xi\in \{0,\s\}^V$. 
\end{enumerate}
Note that Stage (i) preserves the coordinate-wise order on configurations induced by the natural single-site ordering $0\preceq \s\preceq 1\preceq 2\preceq\ldots$. More precisely, under an obvious coupling, we have
\begin{eqnarray}
\label{MONO}
 \eta\preceq\eta' & \Longrightarrow & \zeta\preceq\zeta'.
\end{eqnarray}
In particular, on the event $\{\zeta={\bf 1}\}$, we must also have $\zeta'={\bf 1}$ and hence $\xi'=\xi$. This insensitivity to the initial condition easily implies (see the proof of \cite[Theorem 1]{ARWmixing} for details) that 
\begin{eqnarray}
\label{SST}
\emph{\textrm{the conditional distribution of  $\xi$ given $\{\zeta=\bf 1\}$ is exactly the stationary law $\pi$}}.
\end{eqnarray}
Now, fix $t\in\dN$ and an initial stable configuration $\eta\in\{0,\s\}^V$. Thanks to the Abelian property, we may generate a random configuration $\xi_t$ with law $P^t(\eta,\cdot)$  as follows: we first make $\eta$ unstable by independently inserting $t$ active particles according to $\nu$, and we then run the two-stage stabilization process described above to  produce $\zeta_t\in\{0,\s,1\}^V$  and  $\xi_t\in\{0,\s\}^V$. Writing $\PP_\eta$ to explicitate the dependency in the initial condition $\eta$, we then have for any $\eta'\in \{0,\s\}^V$, 
\begin{eqnarray*}
P^t(\eta,\eta') & = &  \PP_\eta\left(\xi_t=\eta'\right)\\
& \ge & \PP_\eta\left(\zeta_t={\bf 1}, \xi_t=\eta'\right)\\
& = & \PP_\eta\left(\zeta_t={\bf 1}\right)\pi(\eta')\\
& \ge & \PP_{\bf 0}\left(\zeta_t={\bf 1}\right)\pi(\eta')
\end{eqnarray*}
where we have used (\ref{SST})  and then (\ref{MONO}). Moreover, the first  inequality is  an equality in the special case $(\eta,\eta')=(\bf 0,{\rm \bf s})$, because no particle can fall asleep during Stage (i) and the number of particles can not increase during Stage (ii). Also, the second inequality is trivially an equality when $\eta=\bf 0$. Recalling the definition of $\dd(t)$, we have thus shown that
\begin{eqnarray*}
\dd(t) & = &  1-\frac{{P^t(\bf 0},{\rm \bf s})}{\pi({\rm \bf s})} \ = \ \PP_{\bf 0}\left(\zeta_t\ne {\bf 1}\right).
\end{eqnarray*}
This is the desired identity, because under the initial condition $\eta=\bf 0$, the random configuration $\zeta_t$ is distributed exactly as ${\bf 1}_{\cS_t}$. To see this, simply recall the $\lambda\to\infty$ interpretation of $\IDLA(K,\nu)$, and observe  that sending $\lambda\to\infty$ in the two-stage stabilization process does not affect $\zeta_t$ (no particle can fall asleep during Stage (i)) but reduces  $\xi_t$ to the unique stable configuration with the same support as $\zeta_t$ (no particle has time to move or die during Stage (ii)). 
\end{proof}

 \subsection{Monotonicity and concavity}
 \label{sec:coupling}
In view of Proposition \ref{pr:reduction}, we may now completely forget our $\ARW(K,\nu,\lambda)$ chain, and focus on a new problem of independent interest: understanding the distribution of the filling time  of the $\IDLA(K,\nu)$ process. To this end, it will be convenient to consider multiple \IDLA\ chains starting from all possible initial conditions and coupled by using the same sequence $W_1,W_2,\ldots $ of i.i.d. $(K,\nu)$-walks. More precisely,  for each $A\subseteq V$, we define $(\cS^A_t)_{t\ge 0}$ inductively by 
\begin{eqnarray*}
\cS_0^A & := & A \\
\cS_{t}^A & := & f\left(\cS_{t-1}^A,W_t\right)\textrm{ for }t\ge 1,
\end{eqnarray*}
where  $f$ is the update function defined at (\ref{def:IDLA}). We then consider the associated filling time:
\begin{eqnarray*}
T^A & := & \inf\{t\ge 0\colon \cS_t^A=V\}.
\end{eqnarray*}
In particular, $T^V=0$ and for any $x\in V$, the random variable $T^{V\setminus \{x\}}$ is just the number of independent $(K,\nu)$-walks that one needs to sample in order to hit $x$, i.e.
\begin{eqnarray}
\label{geom}
T^{V\setminus \{x\}} & \sim & \textrm{Geometric}(p(x)).
\end{eqnarray}
To be consistent with our previous notation, we will simply write $\cS_t^\emptyset=\cS_t$ and $T^\emptyset=T$. 
The interest of this Markovian grand coupling is revealed in the following lemma. While the monotonicity property (i) is a well-known feature of \IDLA, the subtler concavity property (ii) seems to be new, and will play a crucial role in our proof of Theorem \ref{th:cutoff}. 
\begin{lemma}[Monotonicity and concavity]\label{lm:mono} For every $t\in\dN$ and every $A\subseteq B\subseteq V$, we have
\begin{enumerate}[(i)]
\item $\cS_t^A  \subseteq  \cS_t^B$
\item $|\cS_{t+1}^B|-|\cS_{t+1}^A|  \le  |\cS_{t}^B|-|\cS_{t}^A|$.
  \end{enumerate}
\end{lemma}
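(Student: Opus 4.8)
The plan is to extract a single pointwise property of the IDLA update map $f$ from (\ref{def:IDLA}): for every walk $w=(x_0,\ldots,x_r)$ and every pair $S\subseteq S'\subseteq V$, one should have (a) $f(S,w)\subseteq f(S',w)$, and (b) $f(S',w)\neq S'$ implies $f(S,w)\neq S$. Granting (a)--(b), both parts of the lemma follow almost immediately. Part (i) is obtained by induction on $t$: the base case is the hypothesis $\cS_0^A=A\subseteq B=\cS_0^B$, and the inductive step is precisely (a) applied with $S=\cS_{t-1}^A$, $S'=\cS_{t-1}^B$ and $w=W_t$. For part (ii), observe that applying $f$ increases the cardinality of a set by $0$ or $1$, so $|f(S',w)|-|S'|$ and $|f(S,w)|-|S|$ both lie in $\{0,1\}$; property (b) says the second equals $1$ whenever the first does, hence $|f(S',w)|-|S'|\le|f(S,w)|-|S|$, that is $|f(S',w)|-|f(S,w)|\le|S'|-|S|$. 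Applying this with $S=\cS_t^A\subseteq\cS_t^B=S'$ (legitimate by part (i)) and $w=W_{t+1}$ yields exactly (ii).

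The remaining work is the elementary verification of (a) and (b). Set $k:=\min\{i\le r:x_i\notin S\}$ and $k':=\min\{i\le r:x_i\notin S'\}$ when these exist. Since $S\subseteq S'$, any $x_i\notin S'$ satisfies $x_i\notin S$; hence if $k'$ exists then so does $k$, with $k\le k'$, and $x_i\in S'$ for all $i<k'$, so in particular $x_k\in S'$ when $k<k'$. Property (b) is then immediate: if $f(S',w)\neq S'$ then $k'$, and therefore $k$, exists, and $f(S,w)=S\cup\{x_k\}\neq S$. For (a): if $k'$ exists, then $x_k\in S'\cup\{x_{k'}\}$ (either $k=k'$, or $k<k'$ and $x_k\in S'$), so $f(S,w)=S\cup\{x_k\}\subseteq S'\cup\{x_{k'}\}=f(S',w)$; and if $k'$ does not exist, then $f(S',w)=S'$ contains every $x_i$, so $f(S,w)$, being either $S$ or $S\cup\{x_k\}$ with $x_k\in S'$, is contained in $S'$. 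This exhausts all cases, including the degenerate ones where the walk dies before reaching an unoccupied site.

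I do not expect a genuine obstacle here; the one point deserving care is the deliberate asymmetry in (b): the \emph{smaller} aggregate is forced to grow whenever the larger one does, but not conversely, because the particle added to the larger aggregate may settle on a site already occupied in the smaller one. This asymmetry is exactly the mechanism behind the concavity in (ii) — a step can only close, never widen, the gap $|\cS_t^B|-|\cS_t^A|$. When writing up I would make sure the cases where $k$ or $k'$ fails to exist are treated explicitly, as above, and note that nothing beyond $A\subseteq B$ and the shared driving sequence $(W_t)_{t\ge1}$ is used.
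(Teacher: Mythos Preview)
Your proof is correct and follows essentially the same route as the paper: both reduce the lemma to a one-step property of the update map $f$ and then verify that property by a short case analysis, with the induction on $t$ carrying the conclusion. The paper splits into four explicit cases according to where $w$ first exits $A$ and $B$, while you organize the same case distinction via the indices $k,k'$ and the abstracted properties (a)--(b); the content is identical.
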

\begin{proof} Fix $A\subseteq B\subseteq V$ and a finite sequence of sites $w$,  and let us compare $A':=f(A,w)$ with $B':=f(B,w)$. In view of the definition of $f$, there are four possible cases:
\begin{enumerate}
\item If $w$ does not visit $A^c$,  then $(A',B')=(A,B)$.
\item If $w$ visits $A^c$ but not $B^c$, then 
$(A',B')=(A\cup\{x\},B)$ for some $x\in B\setminus A$.
\item If $w$ visits  $B^c$ before  $B\setminus A$, then $(A',B')=(A\cup\{x\},B\cup\{x\})$ for some $x\in B^c$.
\item If $w$ visits  $B^c$ after $B\setminus A$, then $(A',B')=(A\cup\{x\},B\cup\{y\})$ for some $x\in B\setminus A$ and  $y\in B^c$.
\end{enumerate}
In all cases, we have $A'\subseteq B'$ and $|B'|-|A'|\le |B|-|A|$, so (iii) and (iv) follow by induction.
\end{proof}
As a first application, let us establish the promised sub-multiplicativity of $\dd$.
\begin{lemma}[Sub-multiplicativity of $\dd$]\label{lm:submult}For all $s,t\in\dN$, we have 
\begin{eqnarray*}
\dd(s+t) &  \le  & \dd(s)\dd(t).
\end{eqnarray*} 
\end{lemma}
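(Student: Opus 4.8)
The plan is to combine Proposition~\ref{pr:reduction} with the monotonicity part~(i) of Lemma~\ref{lm:mono}; the concavity part~(ii) is not needed here. By Proposition~\ref{pr:reduction} we have $\dd(u)=\PP(T>u)$ for every $u\in\dN$, where $T=T^\emptyset$ is the filling time of the $\IDLA(K,\nu)$ process driven by the i.i.d. sequence $W_1,W_2,\ldots$ of $(K,\nu)-$walks. So it suffices to prove the inequality $\PP(T>s+t)\le\PP(T>s)\,\PP(T>t)$.

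First I would condition on the aggregate $\cS_s$ reached after $s$ steps. Since $\cS_s$ is a deterministic function of $W_1,\ldots,W_s$ and each update $\cS_{u}=f(\cS_{u-1},W_u)$ depends only on the current set, the Markov property shows that, conditionally on $\{\cS_s=A\}$, the continuation $(\cS_{s+u})_{u\ge0}$ is distributed exactly as the grand-coupling chain $(\cS^A_u)_{u\ge0}$ started from $A$ and driven by a fresh i.i.d. copy of the walk sequence. Since $T>s$ forces $A\neq V$, and since the time to reach $V$ from $A$ along this continuation is precisely $T^A$, summing over the (proper) subsets $A$ that $\cS_s$ may equal gives
\begin{equation*}
\PP(T>s+t)\;=\;\sum_{A\subsetneq V}\PP\big(\cS_s=A\big)\,\PP\big(T^A>t\big).
\end{equation*}

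The key step, and the only one requiring the coupling, is the uniform bound $\PP(T^A>t)\le\dd(t)$. Applying Lemma~\ref{lm:mono}(i) with $\emptyset\subseteq A$ yields $\cS^\emptyset_u\subseteq\cS^A_u$ for all $u$, hence $\{\cS^\emptyset_t=V\}\subseteq\{\cS^A_t=V\}$, i.e. $T^A\le T^\emptyset=T$ pointwise; in particular $\PP(T^A>t)\le\PP(T>t)=\dd(t)$. Substituting this into the displayed identity,
\begin{equation*}
\PP(T>s+t)\;\le\;\dd(t)\sum_{A\subsetneq V}\PP\big(\cS_s=A\big)\;=\;\dd(t)\,\PP(\cS_s\neq V)\;=\;\dd(t)\,\dd(s),
\end{equation*}
which is the claim. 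I do not expect a genuine obstacle: the argument is short, and the only point to state carefully is the application of the Markov property that justifies decomposing over $\cS_s$ with the continuation having the law of the grand-coupling chain from $A$.
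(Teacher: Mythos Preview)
Your proof is correct and follows essentially the same approach as the paper: reduce to the $\IDLA$ filling time via Proposition~\ref{pr:reduction}, apply the Markov property at time $s$, and use the monotonicity part~(i) of Lemma~\ref{lm:mono} to bound $\PP(T^A>t)\le\PP(T^\emptyset>t)$. The paper merely packages your sum over $A\subsetneq V$ as the expectation $\EE[g_t(\cS_s){\bf 1}_{(T>s)}]$ with $g_t(A):=\PP(T^A>t)$, but the content is identical.
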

\begin{proof}Fix $s,t\in\dN$. Property (i) guarantees that the function $g_t\colon 2^V\to[0,\infty)$ defined by 
\begin{eqnarray*}
g_t(A) & := & \PP\left(T^A>t\right)
\end{eqnarray*}
is non-increasing. Now, by the Markov property at time $s$, we have
 \begin{eqnarray*}
\PP(\Tfill>s+t) & = & \EE\left[g_t(\cS_s){\bf 1}_{(\Tfill>s)}\right]\\
& \le & \EE\left[g_t(\emptyset){\bf 1}_{(\Tfill>s)}\right]\\
& = & \PP(\Tfill>t)\PP(\Tfill>s).
 \end{eqnarray*}
In view of Proposition \ref{pr:reduction}, this concludes the proof.
\end{proof}
A useful  consequence of this is that the mean filling time $\EE[T]$ is a $2-$approximation of $\tsep$.
\begin{lemma}[First-moment estimate]\label{lm:L1} We have
\begin{eqnarray*}
\frac{\EE[T]}{2} \ \le & \tsep & \le \ 2\EE[T].
\end{eqnarray*}
\end{lemma}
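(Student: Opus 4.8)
The plan is to use Proposition~\ref{pr:reduction} to replace $\dd(t)$ by $\PP(\Tfill>t)$ throughout, so that the claim becomes a purely probabilistic statement relating the mean of the filling time $\Tfill$ to its median-type quantity $\tsep=\inf\{t\colon\PP(\Tfill>t)\le 1/2\}$. Both inequalities will come from the sub-multiplicativity of $\dd$ established in Lemma~\ref{lm:submult}, which says $\PP(\Tfill>s+t)\le\PP(\Tfill>s)\PP(\Tfill>t)$; in particular $t\mapsto\PP(\Tfill>t)$ decays at least geometrically once it drops below $1$, and more precisely $\PP(\Tfill>kt)\le\PP(\Tfill>t)^k$ for every $k\in\dN$.

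For the upper bound $\tsep\le 2\EE[\Tfill]$: set $m:=\lceil 2\EE[\Tfill]\rceil$. By Markov's inequality $\PP(\Tfill>\EE[\Tfill])$ need not be small, but we can instead argue that $\PP(\Tfill> m/2)\le\PP(\Tfill\ge\EE[\Tfill])$ is not directly usable; the cleaner route is: by Markov, $\PP(\Tfill> 2\EE[\Tfill])\le 1/2$ would finish it immediately if $2\EE[\Tfill]$ were an integer, and in general $\PP(\Tfill\ge\lceil 2\EE[\Tfill]\rceil)\le\PP(\Tfill> 2\EE[\Tfill]-1)$ — so I would simply note $\PP(\Tfill\ge 2\EE[\Tfill])\le\tfrac12$ by Markov, hence $\dd(\lceil 2\EE[\Tfill]\rceil)\le\PP(\Tfill\ge 2\EE[\Tfill])\le\tfrac12$, giving $\tsep\le\lceil 2\EE[\Tfill]\rceil$. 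If one wants exactly $2\EE[\Tfill]$ without the ceiling, invoke that $\tsep$ is defined on $\dN$ and $\EE[\Tfill]\ge 1$ (since $\Tfill\ge 1$ always, as $\cS_0=\emptyset\ne V$), so $\lceil 2\EE[\Tfill]\rceil\le 2\EE[\Tfill]$ whenever $2\EE[\Tfill]\in\dN$, and otherwise a short comparison absorbs the rounding; I expect the paper is content with $\tsep\le 2\EE[\Tfill]$ up to this harmless point.

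For the lower bound $\EE[\Tfill]/2\le\tsep$: write $\EE[\Tfill]=\sum_{t\ge 0}\PP(\Tfill>t)$ and split the sum at multiples of $\tsep=:T_0$. Using sub-multiplicativity, for $t\in[kT_0,(k+1)T_0)$ we have $\PP(\Tfill>t)\le\PP(\Tfill>kT_0)\le\PP(\Tfill>T_0)^k\le 2^{-k}$, since $\dd(T_0)\le 1/2$ by definition of $\tsep$. Therefore
\begin{eqnarray*}
\EE[\Tfill] & = & \sum_{k\ge 0}\sum_{t=kT_0}^{(k+1)T_0-1}\PP(\Tfill>t) \ \le \ \sum_{k\ge 0}T_0\,2^{-k} \ = \ 2T_0 \ = \ 2\tsep,
\end{eqnarray*}
which is the desired bound. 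The only mild subtlety — and the part I expect to require the most care — is justifying the passage from $\dd$ to $\PP(\Tfill>\cdot)$ cleanly (handled by Proposition~\ref{pr:reduction}) and handling the integer-rounding in the upper bound so that the stated constant $2$ is literally correct rather than $2+o(1)$; both are routine, and no genuine obstacle is anticipated.
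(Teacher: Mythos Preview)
Your proposal is correct and follows essentially the paper's approach: the lower bound argument is identical, and the upper bound also rests on Markov's inequality. The paper sidesteps your rounding worry by running Markov the other way --- from $\dd(\tsep-1)>\tfrac12$ and $\dd(\tsep-1)=\PP(T\ge\tsep)\le\EE[T]/\tsep$ one reads off $\tsep<2\EE[T]$ directly, with no ceiling to absorb.
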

\begin{proof}Using the definition of $\tsep=\tsep(1/2)$ and Proposition \ref{pr:reduction}, we have
\begin{eqnarray*}
\frac{1}{2} & < & \dd(\tsep-1) \ = \ \PP(T\ge \tsep) \ \le \ \frac{\EE[T]}{\tsep},
\end{eqnarray*}
which yields the upper bound. The lower bound uses the  sub-multiplicativity of $t\mapsto\PP(T>t)$ to relate the mean and median of $T$: we have $\PP(T>q\tsep)\le 2^{-q}$ for all $q\in\dN$,  hence
\begin{eqnarray*}
\EE[T] & =  & \sum_{k=0}^\infty \PP(T>k)\\
& = & \sum_{q=0}^\infty\sum_{r=0}^{\tsep-1}\PP(T>q\tsep+r)\\
& \le & \sum_{q=0}^\infty\sum_{r=0}^{\tsep-1}2^{-q}\\
& = & 2\tsep.
\end{eqnarray*}
\end{proof}

\subsection{Proof of Theorem \ref{th:trel}}
\label{sec:trel}

Let us prove Theorem \ref{th:trel}. Write $p_\star:=\min_xp(x)$. In view of (\ref{decayrate}), our goal is to prove that 
\begin{eqnarray}
\label{trel:goal}
\lim_{t\to\infty}\left(\PP(\Tfill> t)\right)^{\frac 1t} & = & 1-p_\star.
\end{eqnarray}
Note that the limit on the left-hand side exists, by sub-multiplicativity. 
Now, fix $x\in V$ and $t\in\dN$. By the monotonicity of IDLA (Property (i) in Lemma \ref{lm:mono}) and (\ref{geom}), we have
\begin{eqnarray}
\label{trel:lb}
\PP(\Tfill> t) & \ge & \PP(\Tfill^{V\setminus\{x\}}> t)\ = \ (1-p(x))^t.
\end{eqnarray}
Optimizing over $x\in V$ already yields the lower bound  in (\ref{trel:goal}). Conversely, let us decompose $T $ as 
\begin{eqnarray*}
\Tfill & = & \sum_{k=0}^{n-1}(T_{k+1}-T_{k}),
\end{eqnarray*}
 where 
$
T_k   :=  \inf\left\{t\ge 0\colon |\cS_t|=k\right\}
$
denotes the first time at which $k$ distinct vertices have been covered. Note that the latter is a stopping time for  the natural filtration $(\cF_t)_{t\ge 0}$ of our Markovian grand coupling. Now fix $0\le k<n$ and observe that conditionally on $\cF_{T_k}$, the random variable $T_{k+1}-T_k$ has a geometric distribution  with success probability $p(\cS_{T_k}^c)$, where we have extended the definition of hitting probabilities (\ref{def:p}) from sites to subsets  $A\subseteq V$ in the natural way, i.e.,
\begin{eqnarray}
\label{def:pA}
p(A) & := & \bP_\nu\left(\tau_A<\tau_\dagger\right).
\end{eqnarray}
Since the function $p\colon 2^V\to[0,1]$ is non-decreasing, we have $p(A)\ge p_\star$ for any non-empty set $A$, hence for $A=\cS_{T_k}^c$. It follows that $\Tfill$ is stochastically dominated by the sum of $n$ independent geometric random variables with success parameter $p_\star$. In particular, 
\begin{eqnarray*}
\EE\left[z^{-\Tfill}\right] & < & \infty,
\end{eqnarray*}
for any $z\in(1-p_\star,1)$, and since $\PP(\Tfill>t)
 \le  z^{t} \EE\left[z^{-\Tfill}\right],$ the upper bound  in (\ref{trel:goal}) follows.

 \subsection{Proof of Theorem \ref{th:cutoff}}
 \label{sec:cutoff}
In this section, we estimate the width of the mixing window. We start by observing that the representation $\dd(t)=\PP(T>t)$ provided by Proposition \ref{pr:reduction} reduces our task to a  variance estimate.
\begin{lemma}[Width of the mixing window]\label{lm:window} For any $\varepsilon\in(0,1)$, we have
\begin{eqnarray*}
\tsep(\varepsilon)-\tsep(1-\varepsilon) & \le & 2\sqrt{\frac{\mathrm{Var}(T)}{\varepsilon}}.
\end{eqnarray*}
\end{lemma}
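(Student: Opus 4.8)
By Proposition~\ref{pr:reduction} we may trade the separation distance for the tail of the filling time, $\dd(t)=\PP(T>t)$ for all $t\in\dN$, so that $\tsep(\delta)=\inf\{t\in\dN\colon\PP(T>t)\le\delta\}$ is, up to the integer rounding, the $(1-\delta)$-quantile of the random variable $T$ --- which has finite variance, since $T$ is stochastically dominated by a sum of $|V|$ independent geometric variables (exactly as in the proof of Theorem~\ref{th:trel}). The plan is then to sandwich \emph{both} $\tsep(\varepsilon)$ and $\tsep(1-\varepsilon)$ inside the interval $\bigl[\EE[T]-c,\,\EE[T]+c\bigr]$ with $c:=\sqrt{\mathrm{Var}(T)/\varepsilon}$, using nothing more than Chebyshev's inequality; subtracting the two bounds immediately gives $\tsep(\varepsilon)-\tsep(1-\varepsilon)\le 2c$, which is the claim.

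The first step is to record two elementary facts coming purely from the definition of $\tsep$. Since $\dd$ is a non-increasing step function on $\dN$ with $\dd(0)=1$ and $\dd(t)\to 0$, the infimum defining $\tsep(\delta)$ is attained, and $\tsep(\delta)\ge 1$ whenever $\delta<1$. Writing $M:=\tsep(\varepsilon)$ and $m:=\tsep(1-\varepsilon)$, and using that $T$ is integer-valued, this yields
\[
\PP(T\ge M)\ =\ \dd(M-1)\ >\ \varepsilon, \qquad \PP(T\le m)\ =\ 1-\dd(m)\ \ge\ \varepsilon .
\]
The second step feeds each of these into Chebyshev's inequality around $\EE[T]$. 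If $M>\EE[T]$, then $\varepsilon<\PP(T\ge M)\le\PP(|T-\EE[T]|\ge M-\EE[T])\le\mathrm{Var}(T)/(M-\EE[T])^2$, hence $M<\EE[T]+c$; and if $M\le\EE[T]$ this bound is trivial. Symmetrically, if $m<\EE[T]$ then $\varepsilon\le\PP(T\le m)\le\mathrm{Var}(T)/(\EE[T]-m)^2$, hence $m\ge\EE[T]-c$; and if $m\ge\EE[T]$ this is trivial. Subtracting the two inequalities gives $\tsep(\varepsilon)-\tsep(1-\varepsilon)=M-m\le 2c$.

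The only point requiring care --- and the one I would treat as the ``main obstacle'', though it is a mild one --- is the bookkeeping needed to land on the clean constant $2$ rather than $2c+O(1)$: this forces one to use precisely the one-step-apart quantile characterizations above, namely $\PP(T\ge M)>\varepsilon$ obtained from $\dd(M-1)$ and $\PP(T\le m)\ge\varepsilon$ obtained from $\dd(m)$, so that the two unavoidable off-by-one shifts cancel, together with the harmless casework handling a quantile that falls on the ``wrong'' side of $\EE[T]$ (where Chebyshev is vacuous but the desired bound is immediate). I expect no genuine difficulty beyond this: the lemma uses no structure of the ARW or IDLA dynamics other than the identity $\dd(t)=\PP(T>t)$ and the finiteness of $\mathrm{Var}(T)$, and all of its quantitative content is the two displayed lines of Chebyshev. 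The substantive work, carried out separately, is of course to bound $\mathrm{Var}(T)$ itself --- presumably through the decomposition $T=\sum_k(T_{k+1}-T_k)$ into conditionally geometric increments from the proof of Theorem~\ref{th:trel}, combined with the concavity property~(ii) of Lemma~\ref{lm:mono} --- which is what upgrades this lemma into Theorem~\ref{th:cutoff}.
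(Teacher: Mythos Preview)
Your proof is correct and follows essentially the same route as the paper's: both arguments use Proposition~\ref{pr:reduction} to identify $\dd(t)=\PP(T>t)$ and then apply Chebyshev's inequality to trap $\tsep(\varepsilon)$ and $\tsep(1-\varepsilon)$ within distance $\sqrt{\mathrm{Var}(T)/\varepsilon}$ of $\EE[T]$. The paper phrases this by first defining the integers $t^\pm=\lfloor\EE[T]\pm c\rfloor$ and showing $\tsep(\varepsilon)\le t^+$, $\tsep(1-\varepsilon)\ge t^-$, whereas you start from $M,m$ and bound them via Chebyshev directly; these are two presentations of the same computation, and your handling of the integer bookkeeping is if anything slightly cleaner.
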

\begin{proof}Fix $\varepsilon\in(0,1)$ and let us introduce the integers
\begin{eqnarray*}
t^- \ := \ \left\lceil\EE[T]-\sqrt{\frac{\mathrm{Var}(T)}{\varepsilon}}\right\rceil\ & \textrm{ and } & t^+ \ := \ \left\lfloor\EE[T]+\sqrt{\frac{\mathrm{Var}(T)}{\varepsilon}}\right\rfloor.
\end{eqnarray*}
Using Chebychev's inequality, and the fact that $T,t^\pm$ are integers, we have
\begin{eqnarray*}
\PP(T>t^+) \ < \ \varepsilon & \textrm{ and } & \PP(T\ge t^-)\ > \ 1-\varepsilon.
\end{eqnarray*}
In view of Proposition \ref{pr:reduction}, this gives $\tsep(\varepsilon)\le t^+$ and $\tsep(1-\varepsilon)\ge t^-$, and we conclude that $\tsep(\varepsilon)-\tsep(1-\varepsilon)\le t^+-t^-$.
\end{proof}
We now recall a beautiful variance estimate for hitting times of increasing Markov processes due to Aldous \cite[Lemma 1.1]{AldousConcentration}. Surprisingly, the latter does not seem to have found many applications. 
\begin{lemma}[Variance estimate for hitting times, Aldous \cite{AldousConcentration}]\label{lm:aldous}Let $T$ be the hitting time of some fixed state by a Markov chain  on some finite state space. Write $h(A):=\EE_A[T]$ for its expectation when starting from the initial state $A$, and suppose that for any allowed transition $A\to B$, we have
\begin{eqnarray*}
0 \ \le & h(A)-h(B) & \le \ \kappa,
\end{eqnarray*}
where $\kappa$ is a constant. Then, $\textrm{Var}_A(T)\le \kappa\,\EE_A[T]$ for any initial state  $A$.
\end{lemma}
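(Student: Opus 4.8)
The plan is to follow Aldous's original martingale argument, which fits our grand coupling perfectly. First I would fix an initial state $A$ and abbreviate $h:=h(A)=\EE_A[T]$. The idea is to run the chain and track the Doob-type process $M_t := h(\cS_{t\wedge T}) + (t\wedge T)$, where $\cS_t$ is the Markov chain in question and $T$ its hitting time of the absorbing state (in our application, $\cS_t$ is the IDLA configuration and $T$ is the filling time, but the lemma is stated abstractly). Since $h$ is harmonic for the killed chain in the sense that $\EE[h(\cS_{t+1})\mid\cS_t]=h(\cS_t)-1$ on $\{t<T\}$ (this is just the first-step decomposition of the expected hitting time), $M_t$ is a martingale. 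It is bounded (both $h$ and $t\wedge T$ are bounded since the state space is finite and $T$ has finite expectation), so optional stopping gives $\EE_A[M_T]=M_0$, i.e. $\EE_A[T]=h$, which is consistent, and more importantly lets us compute the variance via the martingale's quadratic variation.

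The key computation is $\textrm{Var}_A(T) = \textrm{Var}_A(M_T) = \EE_A\big[\sum_{t\ge 0}(M_{t+1}-M_t)^2\big] = \EE_A\big[\sum_{t<T}(M_{t+1}-M_t)^2\big]$, using orthogonality of martingale increments. Now on $\{t<T\}$, the increment is $M_{t+1}-M_t = h(\cS_{t+1})-h(\cS_t)+1$. Writing $D_t := h(\cS_t)-h(\cS_{t+1})$, we have $M_{t+1}-M_t = 1 - D_t$, and by the defining hypothesis $0\le D_t\le\kappa$ (it is a difference $h(A)-h(B)$ over an allowed transition), together with $\EE[D_t\mid\cS_t]=1$ on $\{t<T\}$ from harmonicity. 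Therefore $\EE\big[(M_{t+1}-M_t)^2\mid\cS_t\big] = \EE[(1-D_t)^2\mid\cS_t] = \EE[D_t^2\mid\cS_t] - 2\EE[D_t\mid\cS_t]+1 = \EE[D_t^2\mid\cS_t]-1 \le \kappa\,\EE[D_t\mid\cS_t]-1 = \kappa-1\le\kappa$, where the inequality uses $D_t^2\le\kappa D_t$ (valid since $0\le D_t\le\kappa$). Summing over $t<T$ and taking expectations yields $\textrm{Var}_A(T)\le\kappa\,\EE_A\big[\sum_{t<T}1\big] = \kappa\,\EE_A[T]$, which is exactly the claim.

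The main obstacle, such as it is, is purely a matter of integrability bookkeeping: I need to justify the use of optional stopping and the interchange of expectation and infinite sum. Since the underlying state space is finite, $h$ is a bounded function, and $T$ has finite expectation (indeed in our IDLA application $T$ has exponential moments, but finiteness of the mean suffices and can be assumed or derived from the hypothesis $h(A)-h(B)\ge 0$ plus irreducibility-to-the-target); hence $M_{t\wedge T}$ is a bounded martingale and all the manipulations are legitimate by dominated convergence and the $L^2$ martingale convergence theorem. I would state this integrability remark briefly rather than belabor it. One small point worth flagging explicitly in the write-up: the hypothesis gives $h(A)-h(B)\le\kappa$ for \emph{allowed} transitions $A\to B$, and the martingale increments only ever involve such transitions, so the bound $0\le D_t\le\kappa$ is applied only where it is valid.
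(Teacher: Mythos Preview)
Your argument is correct and is precisely Aldous's original martingale proof. Note, however, that the paper does not actually prove this lemma: it is quoted verbatim as \cite[Lemma 1.1]{AldousConcentration} and used as a black box, so there is no ``paper's own proof'' to compare against. Your write-up is a faithful reconstruction of the cited argument: the process $M_t=h(\cS_{t\wedge T})+(t\wedge T)$ is a bounded martingale with $M_T=T$, the orthogonality of increments gives $\mathrm{Var}_A(T)=\sum_{t\ge 0}\EE_A[(M_{t+1}-M_t)^2]$, and the bound $D_t^2\le\kappa D_t$ together with $\EE[D_t\mid\cF_t]={\bf 1}_{\{t<T\}}$ yields the conclusion. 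In fact your computation gives the slightly sharper $\mathrm{Var}_A(T)\le(\kappa-1)\EE_A[T]$, which is consistent since the hypotheses force $\kappa\ge 1$. The integrability remarks you flag are exactly right and need only a sentence.
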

We will of course apply this lemma to the \IDLA\ process, and we thus  set
\begin{eqnarray*}
h(A) & := & \EE\left[T^A\right].
\end{eqnarray*}
Our next step consists in showing that the assumption of the above lemma holds with $\kappa=\trel$. 
\begin{lemma}[Control on increments]\label{lm:increment}For any $x\in V$ and any $A\subseteq V$, we have
\begin{eqnarray*}
0 & \le  \ h(A)-h(A\cup\{x\}) \ \le & \trel.
\end{eqnarray*}
\end{lemma}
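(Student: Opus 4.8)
The plan is to control the increment $h(A)-h(A\cup\{x\})$ by comparing the filling times $T^A$ and $T^{A\cup\{x\}}$ under the Markovian grand coupling of Section~\ref{sec:coupling}. The lower bound $h(A)\ge h(A\cup\{x\})$ is immediate from the monotonicity property (i) of Lemma~\ref{lm:mono}: since $A\subseteq A\cup\{x\}$, we have $\cS^A_t\subseteq\cS^{A\cup\{x\}}_t$ for every $t$, hence $T^A\ge T^{A\cup\{x\}}$ pathwise, hence $h(A)\ge h(A\cup\{x\})$. The real content is the upper bound by $\trel=(\min_y p(y))^{-1}$.

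For the upper bound, I would first reduce to the case $A=V\setminus\{x\}$, or more precisely exploit the concavity property (ii) of Lemma~\ref{lm:mono}. The natural idea: write $h(A)-h(A\cup\{x\}) = \EE[T^A-T^{A\cup\{x\}}] = \sum_{t\ge 0}\PP(T^A>t\ge T^{A\cup\{x\}}) = \sum_{t\ge 0}\PP\bigl(\cS^{A\cup\{x\}}_t=V,\ \cS^A_t\ne V\bigr)$. On the event $\{\cS^{A\cup\{x\}}_t=V\}$, monotonicity gives $\cS^A_t\subseteq V$ and the difference $|\cS^{A\cup\{x\}}_t|-|\cS^A_t| = |V|-|\cS^A_t|$; by concavity (ii), $t\mapsto |\cS^{A\cup\{x\}}_t|-|\cS^A_t|$ is non-increasing, starting from $|A\cup\{x\}|-|A|=1$, so this difference is always $0$ or $1$. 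Thus once the two chains have not yet coupled but the larger one has filled $V$, the discrepancy is exactly the single site $\cS^A_t{}^c = \{y_t\}$ for some $y_t\in V$, and $\PP(\cS^{A\cup\{x\}}_t=V,\cS^A_t\ne V) = \sum_{y\in V}\PP(\cS^A_t = V\setminus\{y\},\ \cS^{A\cup\{x\}}_t=V)$.

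The key step is then to bound $h(A)-h(A\cup\{x\})$ by noting that after the coupling gap has shrunk to a single missing site $y$, the remaining time for $\cS^A$ to fill is stochastically at most $\mathrm{Geometric}(p(y))\le\mathrm{Geometric}(p_\star)$ in expectation, i.e.\ at most $\trel$; but one must be careful that conditionally on the relevant $\sigma$-algebra the missing site is indeed a fixed $y$ and the future walks are fresh i.i.d.\ $(K,\nu)$-walks. Concretely, let $\sigma := \inf\{t\ge 0 : |\cS^{A\cup\{x\}}_t|-|\cS^A_t|\le 1\}$, a stopping time for the grand-coupling filtration $(\cF_t)$; by concavity, for $t\ge\sigma$ the gap is $0$ or $1$, and on $\{$gap $=1\}$ we have $\cS^A_t = \cS^{A\cup\{x\}}_t\setminus\{y\}$. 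Then $T^A - T^{A\cup\{x\}} = (T^A-\sigma)^+ - (T^{A\cup\{x\}}-\sigma)^+$ on the appropriate events, and after $\sigma$ the process $\cS^A$ needs to hit one more specific site, which by the Markov property and (\ref{geom})-type reasoning takes in expectation at most $1/p(y)\le\trel$ additional steps; meanwhile $T^{A\cup\{x\}}-\sigma\ge 0$. Taking expectations and using the tower property over $\cF_\sigma$ gives $h(A)-h(A\cup\{x\})\le\trel$.

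The main obstacle I anticipate is the bookkeeping at the stopping time $\sigma$: one needs that conditionally on $\cF_\sigma$, on the event that the gap equals $1$ with missing site $y$, the conditional law of $T^A-\sigma$ is that of the number of fresh i.i.d.\ $(K,\nu)$-walks needed to hit $y$ — and crucially that the two chains having gap exactly $1$ (rather than $0$) is compatible with extracting this clean geometric bound. The concavity property (ii) is precisely what guarantees the gap never exceeds $1$ once it drops to $1$, which is what makes ``one missing site'' a stable description; without it the argument would break. An alternative, perhaps cleaner route that avoids the stopping-time subtlety: use $h(A)-h(A\cup\{x\}) = \sum_{t\ge0}\PP(\cS^{A\cup\{x\}}_t=V,\ \cS^A_t\ne V)$ directly, observe the missing site identity, and dominate this sum by the tail sum of a single $\mathrm{Geometric}(p_\star)$ via a pathwise coupling of the residual dynamics of $\cS^A$ with an independent run; I would present whichever of these admits the shortest rigorous write-up.
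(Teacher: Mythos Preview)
Your overall strategy coincides with the paper's: monotonicity gives the lower bound, and concavity (Lemma~\ref{lm:mono}(ii)) ensures the gap $|\cS^{A\cup\{x\}}_t|-|\cS^A_t|$ never exceeds $1$, so that once the larger chain has filled $V$ the smaller one is missing at most one site and the residual expected cost is at most $1/p_\star=\trel$.

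There is, however, a real slip in your concrete implementation. Your stopping time $\sigma:=\inf\{t\ge 0:|\cS^{A\cup\{x\}}_t|-|\cS^A_t|\le 1\}$ equals $0$ identically, because the gap is already $\le 1$ at $t=0$ (as you yourself note). Hence the claim that ``after $\sigma$ the process $\cS^A$ needs to hit one more specific site'' is false: at time $0$, $\cS^A_0=A$ may be far from full, and the missing site relative to $\cS^{A\cup\{x\}}_t$ is not fixed while the latter is still growing. The correct stopping time---and this is exactly what the paper uses---is $\tau:=T^{A\cup\{x\}}$. At that moment $\cS^{A\cup\{x\}}_\tau=V$, so concavity forces $|\cS^A_\tau|\ge|V|-1$; the strong Markov property for $(\cS^A_t)_{t\ge 0}$ at $\tau$ then gives
\[
\EE[T^A]\;=\;\EE[\tau]+\EE\bigl[h(\cS^A_\tau)\bigr]\;\le\;\EE[T^{A\cup\{x\}}]+\max_{y\in V}\frac{1}{p(y)}\;=\;\EE[T^{A\cup\{x\}}]+\trel,
\]
using $h(V\setminus\{y\})=1/p(y)$ from (\ref{geom}). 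This is precisely the ``one missing site'' picture you articulate in your second paragraph, but anchored at the right time; with that single change your argument and the paper's are identical, and the bookkeeping worries you raise disappear.
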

\begin{proof}Fix $A\subseteq V$ and $x\in V$. The monotonicity of \IDLA\ (Property (i) in  Lemma \ref{lm:mono}) ensures that
\begin{eqnarray*}
\Tfill^{A\cup\{x\}} & \le  & \Tfill^A,
\end{eqnarray*}
which readily yields the lower bound. We now turn to the upper bound. By the strong Markov property at the stopping time $\tau=\Tfill^{A\cup\{x\}}$, we have
 \begin{eqnarray}
 \label{aldous}
 \EE\left[\Tfill^A\right] & =& \EE\left[\Tfill^{A\cup\{x\}}\right]+  \EE\left[h\left(\cS^A_{\tau}\right)\right].
 \end{eqnarray}
Now,  the concavity of \IDLA\ (Property (ii) in Lemma \ref{lm:mono}) implies
\begin{eqnarray*}
\forall t\in\dN,\quad \big|\cS^{A\cup\{x\}}_t\big|\le \left|\cS^A_t\right| +1,
\end{eqnarray*}
and choosing $t=\Tfill^{A\cup\{x\}}$ shows that the random set $\cS_\tau^A$ appearing in (\ref{aldous})  satisfies 
\begin{eqnarray*}
|\cS_\tau^A| & \ge & |V|-1.
\end{eqnarray*}
To conclude the proof, we  note that this last condition deterministically implies  $h(\cS_\tau^A)\le \trel$. Indeed,  we have $h(V)=0$ and  $h(V\setminus\{x\})=\frac{1}{p(x)}$ for every $x\in V$, by (\ref{geom}).
\end{proof}
We now have all we need to establish Theorem \ref{th:cutoff}.
\begin{proof}[Proof of Theorem \ref{th:cutoff}]
Combining Lemmas  \ref{lm:aldous} and \ref{lm:increment}, we have 
\begin{eqnarray*}
\mathrm{Var}(T) & \le & \EE[T]\trel\\
& \le & 2\trel\tsep,
\end{eqnarray*}
where the second line uses Lemma \ref{lm:L1}. In view of Lemma \ref{lm:window}, this concludes the proof.
\end{proof}

 \subsection{Proof of Theorem \ref{th:tsep}}\label{sec:tsep}
In this final section, we estimate $\EE[T]$ from above and below to prove Theorem \ref{th:tsep}. We write $n=|V|$. 
\begin{lemma}[A general lower bound]\label{lm:lower}We have
\begin{eqnarray*}
\EE[\Tfill] & \ge & \frac{n(\log n-\log\log n-1)}{\sum_{x\in V}p(x)}.
\end{eqnarray*}
\end{lemma}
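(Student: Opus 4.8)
The plan is to decompose the filling time along the cover order, exactly as in the proof of Theorem~\ref{th:trel}: writing $T_k:=\inf\{t\ge 0:|\cS_t|=k\}$, we have $\Tfill=\sum_{k=0}^{n-1}(T_{k+1}-T_k)$, and conditionally on $\cF_{T_k}$ the increment $T_{k+1}-T_k$ is geometric with success probability $p(\cS_{T_k}^c)$, hence of conditional mean $1/p(\cS_{T_k}^c)\ge 1/p(\cS_{T_k}^c)$. Taking expectations, $\EE[\Tfill]=\sum_{k=0}^{n-1}\EE\big[1/p(\cS_{T_k}^c)\big]$. The point is now to bound each term from below: by Jensen's inequality applied to the convex function $u\mapsto 1/u$, $\EE[1/p(\cS_{T_k}^c)]\ge 1/\EE[p(\cS_{T_k}^c)]$, so it suffices to control $\EE[p(\cS_{T_k}^c)]$ from above, i.e. to show that when only $k$ sites have been filled, the remaining $n-k$ sites are collectively hard to reach.

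\textbf{Key step: bounding $\EE[p(\cS_{T_k}^c)]$.} Here I would use the elementary subadditivity $p(A)\le\sum_{x\in A}p(x)$ together with the fact that $|\cS_{T_k}^c|=n-k$; but this only gives $p(\cS_{T_k}^c)\le\sum_{x\in\cS_{T_k}^c}p(x)$, which is not immediately a bound in terms of $\frac{1}{n}\sum_{x\in V}p(x)$ unless the complement is a ``typical'' set. The cleaner route is to exchange the order of summation: $\sum_{k=0}^{n-1}\EE[p(\cS_{T_k}^c)]\le\sum_{k=0}^{n-1}\EE\big[\sum_{x\in\cS_{T_k}^c}p(x)\big]$, and for a fixed site $x$, $\sum_{k=0}^{n-1}\PP(x\in\cS_{T_k}^c)=\EE[\#\{k:x\notin\cS_{T_k}\}]=\EE[|\cS_{T^{\{\}}_{\text{hit }x}}|]$, the (random) number of distinct sites filled before $x$ is covered --- which is at most $n$. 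That would give $\sum_k\EE[p(\cS_{T_k}^c)]\le n\sum_x p(x)$ is too weak; instead, the correct accounting must weight early $k$ (where $\cS_{T_k}^c$ is large, hence $p(\cS_{T_k}^c)$ is close to $p(V)$, which is $1$) differently from late $k$. So the actual argument, I expect, runs as follows: for each $k$, crudely $\EE[p(\cS_{T_k}^c)]\le\min\{1,\ \overline p\,(n-k)\}$ where $\overline p:=\frac1n\sum_{x\in V}p(x)$ --- the bound $1$ being trivial and the bound $\overline p(n-k)$ coming from subadditivity combined with the observation that $\EE[\sum_{x\in\cS_{T_k}^c}p(x)]=\sum_x p(x)\,\PP(x\notin\cS_{T_k})\le\sum_x p(x)=n\overline p$, but actually we want the $(n-k)$ factor, which requires instead bounding by the \emph{maximum} over $(n-k)$-subsets... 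This is the delicate point.

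\textbf{Assembling the bound.} Granting the estimate $\EE[p(\cS_{T_k}^c)]\le\min\{1,\ (n-k)\overline p\}$ with $\overline p=\frac1n\sum_x p(x)=\rl^{-1}$, we get
\begin{eqnarray*}
\EE[\Tfill] \ \ge \ \sum_{k=0}^{n-1}\frac{1}{\min\{1,(n-k)\overline p\}} \ = \ \sum_{j=1}^{n}\frac{1}{\min\{1,j\overline p\}} \ \ge \ \sum_{j:\,j\overline p\le 1}\frac{1}{j\overline p} \ \ge \ \frac{1}{\overline p}\sum_{j=1}^{\lfloor 1/\overline p\rfloor}\frac1j.
\end{eqnarray*}
Since $\overline p\ge 1/n$ (the range is at most $n$), we have $\lfloor 1/\overline p\rfloor\ge$ something; but to land on the stated bound $\frac{n(\log n-\log\log n-1)}{\sum_x p(x)}=\frac{1}{\overline p}(\log n-\log\log n-1)$ we need the harmonic sum to run up to roughly $n/\log n$ rather than $1/\overline p$, which suggests the sharper input is not $p(\cS_{T_k}^c)\le(n-k)\overline p$ but rather a bound like $\EE[p(\cS_{T_k}^c)]\le\frac{(n-k)\log n}{n}\,\overline p\cdot$ (something), using that a random walk of expected range $n/\overline p$ hitting any of $n-k$ specified sites has probability at most $O\big(\frac{(n-k)}{n}\cdot\frac{\log n}{\overline p}\cdot\overline p\big)$... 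The honest summary is: the main obstacle is proving the right upper bound on $\EE[p(\cS_{T_k}^c)]$ --- specifically establishing that, after $k$ sites are filled, the conditional probability that a fresh $(K,\nu)$-walk reaches the uncovered region is $O\big(\frac{n-k}{n}\log n\big)\cdot\overline p$ uniformly, most likely via a second-moment or union-bound argument over the $\binom{n}{n-k}$ possible complements combined with the submultiplicativity/monotonicity already established. Once that estimate is in hand, summing the harmonic-type series $\sum_{j=1}^{n}\frac{1}{\min\{1,\,cj\log n/n\}}$ and simplifying with the inequality $\sum_{j=1}^{m}\frac1j\ge\log m$ delivers the factor $\log n-\log\log n-1$; the $-\log\log n$ term is precisely the price paid for the $\log n$ in the denominator cutoff $j\gtrsim n/\log n$.
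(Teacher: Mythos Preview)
Your decomposition $\EE[\Tfill]=\sum_{k=0}^{n-1}\EE\big[1/p(\cS_{T_k}^c)\big]$ and the use of Jensen are fine, but the argument breaks down at exactly the step you flag as ``the delicate point'': the estimate $\EE[p(\cS_{T_k}^c)]\le (n-k)\overline p$ is never established, and it is not clear that it holds. Subadditivity gives only $\EE[p(\cS_{T_k}^c)]\le\sum_{x}p(x)\,\PP(x\notin\cS_{T_k})$, and turning this into $(n-k)\overline p$ is equivalent to the correlation inequality
\[
\sum_{x\in V}p(x)\,\PP(x\in\cS_{T_k}) \ \ge \ \frac{k}{n}\sum_{x\in V}p(x),
\]
i.e.\ that sites with large $p(x)$ are, in a suitable average sense, more likely to be filled early. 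This is plausible but not obvious: a site can have large $p(x)$ yet be systematically preceded along walks by other unoccupied sites. Your subsequent speculation about second moments, union bounds over $(n-k)$-subsets, or an extra $\log n$ factor does not lead to a proof, and you say so yourself.

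The paper's proof sidesteps this difficulty entirely by using monotonicity to \emph{change the initial condition} rather than trying to control the random complement. For any $A\subseteq V$, the inequality $\EE[\Tfill]\ge\EE[\Tfill^{V\setminus A}]$ holds, and if one starts the \IDLA\ from $V\setminus A$ then the uncovered set is always a subset of $A$, so the deterministic bound $p(\cS_{T_k}^c)\le (n-k)\max_{x\in A}p(x)$ applies. Summing the resulting harmonic series yields $\EE[\Tfill]\ge \log(1+|A|)/\max_{x\in A}p(x)$. One then orders the sites so that $p(1)\le\cdots\le p(n)$, takes $A=\{1,\ldots,k\}$, bounds $p(k)\le\frac{1}{n+1-k}\sum_x p(x)$, and optimizes at $k=\lceil n/\log n\rceil$; this is precisely where the factor $\log n-\log\log n-1$ appears. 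The missing idea in your attempt is this preliminary ``pre-filling'' of the high-$p$ sites via monotonicity, which converts the intractable random bound into a deterministic one.
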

\begin{proof}Fix an initial condition $A\subsetneq V$. As in the proof of Theorem \ref{th:trel}, we may decompose $T^A$ as
\begin{eqnarray*}
\Tfill^A & = & \sum_{k=|A|}^{n-1}(T_{k+1}^A-T_{k}^A),
\end{eqnarray*}
 where 
$
T_k^A   :=  \inf\left\{t\ge 0\colon |\cS_t^A|=k\right\}.
$
Now, fix $k\in\{|A|,\ldots,n-1\}$ and observe that conditionally on $\cF_{T_k^A}$, the random variable $T_{k+1}^A-T_k^A$ has a geometric distribution  with success probability $p\left(V\setminus\cS_{T_k^A}^A\right)$, where $p\colon 2^V\to[0,1]$ was defined at (\ref{def:pA}). In particular, we have
\begin{eqnarray*}
\EE\left[\Tfill^A\right] & = & \sum_{k=|A|}^{n-1} \EE\left[\frac{1}{p\left(V\setminus\cS_{{T_k^A}}^A\right)}\right].
\end{eqnarray*}
Now, it readily follows from its definition that the function $p$ is sub-additive: 
\begin{eqnarray*}
\forall B\subseteq V,\qquad p(B) & \le & \sum_{x\in B}p(x).
\end{eqnarray*}
Note also that the right-hand side can be further bounded by $|B|\max_{x\in B}p(x)$. 
Applying this to the random subset $B=V\setminus\cS_{{T_k^A}}^A$ appearing in the previous identity, we obtain
\begin{eqnarray*}
\EE\left[\Tfill^A\right] & \ge & \frac{1}{\max_{x\in A^c}p(x)}\sum_{k=|A|}^{n-1} \frac{1}{n-k}\\
& \ge & \frac{\log(1+|A^c|)}{\max_{x\in A^c}p(x)}.
\end{eqnarray*}
Finally, recall that $\EE[\Tfill]\ge \EE[\Tfill^A]$ by the monotonicity of \IDLA\ (Property (i) in Lemma \ref{lm:mono}). Since $A$ was arbitrary, we may finally replace it with its complement to get the nicer-looking bound:
\begin{eqnarray*}
\EE[\Tfill]
& \ge & \frac{\log(1+|A|)}{\max_{x\in A}p(x)}.
\end{eqnarray*}
It remains to optimize over the choice of the subset $A$. To do so, let us relabel the sites so that $V=[n]=\{1,\ldots,n\}$ and $p(1)\le \ldots\le p(n)$. Choosing $A=[k]$ for some $1\le k\le n$, we then have
\begin{eqnarray*}
\max_{x\in A}p(x) & = & p(k) \ \le \ \frac{p(k)+\cdots+p(n)}{n+1-k}\ \le \ \frac{p(1)+\cdots+p(n)}{n+1-k}.
\end{eqnarray*}
Thus, we arrive at
\begin{eqnarray*}
\EE[\Tfill]
& \ge & \frac{(n+1-k)\log(1+k)}{p(1)+\cdots+p(n)},
\end{eqnarray*}
and we may finally choose $k=\lceil \frac{n}{\log n}\rceil$ to optimize this bound and conclude the proof.
\end{proof}

\begin{lemma}[A general upper bound] \label{lm:upper}We have
\begin{eqnarray*}
 \tsep & \le & \left(\sqrt{\rr}+3\sqrt{{\trel\log n}}\right)^2.
\end{eqnarray*}
\end{lemma}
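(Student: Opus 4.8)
By Lemma \ref{lm:L1}, it suffices to bound $\EE[T]$, since $\tsep\le 2\EE[T]$; but a cheap factor of $2$ is too lossy here, because the target is the sharp quantity $(\sqrt{\rr}+3\sqrt{\trel\log n})^2$. So the plan is instead to control the \emph{tail} $\PP(T>t)=\dd(t)$ directly and extract $\tsep$ from it. The natural decomposition is the one already used twice: write $T=\sum_{k=0}^{n-1}(T_{k+1}-T_k)$ where $T_k=\inf\{t:|\cS_t|=k\}$, and recall that conditionally on $\cF_{T_k}$ the increment $T_{k+1}-T_k$ is geometric with parameter $p(\cS_{T_k}^c)\ge p_\star$. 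The first idea is to split the filling process into an \emph{early phase} (covering the first $n-1$ vertices) and a \emph{final phase} (covering the last vertex). For the early phase, one wants to show its expected duration is at most roughly $\rr$; for the final phase — which is geometric$(p(x))$ for the last uncovered $x$, hence on the scale $\trel$ — one uses the concentration already available for geometric variables. The reason $\rr$ appears is that covering vertex $x$ before the walk dies, when the walk starts from $\nu$, happens with probability $p(x)$, whereas if it started from $\cU$ it would be $\bP_\cU(\tau_x<\tau_\dagger)$; the ratio is exactly what $\rr/|V|$ controls, and summing $\sum_x \bP_\cU(\tau_x<\tau_\dagger)$ over a covered-set dynamics is what produces an $O(\rr)$ bound (this is the standard IDLA / coupon-collector-with-defective-coupons estimate, cf. the argument behind the classical bound $\EE[T]=O(\rr)$ for uniform $\nu$).

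Concretely, I would first establish $\EE[T]\le \rr + O(\trel\log n)$ by the following coupling: run the IDLA walks, but think of the $t$-th walk as, with probability depending on its law, ``effectively started from $\cU$''. Since $\nu$ and $\cU$ are related by $\bP_\cU(\tau_x<\tau_\dagger)\le \frac{\rr}{|V|}p(x)$ for every $x$, one can dominate the number of walks needed to cover all of $V$ by the number needed in an auxiliary process whose per-vertex success probabilities are $\ge |V|^{-1}\cdot(\text{something})$, giving the coupon-collector sum $\sum_{k=1}^{n}\frac{\rr}{k}=\rr(\log n+O(1))$ — wait, that is $\rr\log n$, too large. So the correct split must be more careful: the bound $\rr$ (not $\rr\log n$) must come from the bulk where many vertices are still uncovered, using that $p(A^c)$ is large when $|A^c|$ is large (sub-additivity: $p(A^c)\ge$ ... no, sub-additivity gives an \emph{upper} bound; one needs the matching lower bound $p(A^c)\ge \frac{|A^c|}{|V|}\cdot\frac{|V|}{\rr}\min\ldots$). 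The clean statement is: $p(A^c)\ge \frac{1}{\rr}\sum_{x\in A^c}\bP_\cU(\tau_x<\tau_\dagger)\cdot\frac{|V|}{|V|}$, and since $\sum_{x\in A^c}\bP_\cU(\tau_x<\tau_\dagger)=\EE_\cU[\#\{x\in A^c \text{ visited}\}]$, one gets $p(A^c)\ge \frac{1}{\rr}\,\EE_\cU[\text{range restricted to }A^c]$. Then $\EE[T]\le \sum_{k}\EE[1/p(\cS_{T_k}^c)]$, and I would bound this sum by splitting at the threshold where $|\cS_{T_k}^c|$ drops below $n/\trel$ (or a similar crossover): above the threshold the $1/p$ terms are $\lesssim \rr/(\text{range})$ and sum to $O(\rr)$ after a coupon-collector-type estimate; below the threshold there are only $O(n/\trel)\cdot(\ldots)$ — actually $\le n$ — terms each $\le \trel$, but that is $n\trel$, again too big. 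So the genuine crossover is at $|\cS_{T_k}^c|\asymp \trel\log n$-ish, and the below-threshold contribution is $\sum_{j\le \trel\log n}(\ldots)\le \trel\log n\cdot$ harmonic, i.e. $\trel(\log n)^2$ — still not matching.

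The resolution — and this is the step I expect to be the main obstacle — is that one must \emph{not} bound $\EE[T]$ but rather bound the median directly via a two-phase tail estimate, exploiting that after time $\approx\rr+C\trel\log n$ the covered set already satisfies $|\cS_t^c|$ small with high probability, and then the remaining few vertices are each covered in an extra $O(\trel\log n)$ steps by a union bound over geometric tails (each uncovered $x$ has $\PP(\tau_x \text{ not hit in }s\text{ walks})=(1-p(x))^s\le e^{-s/\trel}$, so $s=C\trel\log n$ kills all of them with room to spare). Quantitatively: one shows $\PP\big(T>\rr+3\sqrt{\rr\trel\log n}+\text{l.o.t.}\big)\le 1/2$ by bounding $\EE[T']\le\rr+O(\sqrt{\rr\trel\log n})$ for the truncated-at-$(n-1)$-vertices time $T'$ using Aldous-type concentration (Lemma \ref{lm:aldous} with $\kappa=\trel$ gives $\mathrm{Var}(T)\le\trel\EE[T]$, so if $\EE[T]\le\rr+(\text{small})$ then $T$ concentrates on scale $\sqrt{\rr\trel}$, and $(\sqrt\rr+\sqrt{\trel\log n})^2=\rr+2\sqrt{\rr\trel\log n}+\trel\log n$ is exactly the AM-type bound that drops out); the ``$3$'' absorbs the $\log\log$ and constant slack. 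So the skeleton is: (1) prove $\EE[T]\le\rr+3\sqrt{\rr\trel\log n}+(\text{stuff}\le\trel\log n)$ via the sub-additivity/coupon-collector estimate for the bulk plus a geometric-tail union bound for the last $O(\trel\log n)$ vertices; (2) feed this into $\mathrm{Var}(T)\le\trel\EE[T]$ (Lemma \ref{lm:aldous}); (3) apply Chebyshev at $t=\EE[T]+\sqrt{2\mathrm{Var}(T)}$ to get $\PP(T>t)<1/2$, hence $\tsep\le t$; (4) simplify $t=\EE[T]+\sqrt{2\trel\EE[T]}$ using $\EE[T]\le(\sqrt\rr+\sqrt{\trel\log n}\cdot c)^2$ and the elementary inequality $a+\sqrt{2\trel a}\le(\sqrt a+\sqrt{2\trel}/\ldots)^2$, tuning constants so the final form is $(\sqrt{\rr}+3\sqrt{\trel\log n})^2$. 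The delicate point throughout is making the bulk sum genuinely $O(\rr)$ rather than $O(\rr\log n)$ — this requires that $\sum_{x\in A^c}\bP_\cU(\tau_x<\tau_\dagger)$ is comparable to $|A^c|$ times an average, i.e. using the \emph{expected range} interpretation rather than a worst-case per-vertex bound — exactly the content of the definition of $\rr$ and of the inequality $\rl\le\trel\le\rr$ asserted in Theorem \ref{th:tsep}.
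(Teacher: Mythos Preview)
Your approach has a genuine gap: you never establish the crucial bound $\EE[T]\le \rr+O(\sqrt{\rr\,\trel\log n})+O(\trel\log n)$, and in fact the decomposition $T=\sum_k(T_{k+1}-T_k)$ with $T_{k+1}-T_k\sim\mathrm{Geom}(p(\cS_{T_k}^c))$ cannot deliver it. The only lower bound on $p(A^c)$ that the definition of $\rr$ provides is $p(A^c)\ge\max_{x\in A^c}p(x)\ge\frac{|V|}{\rr}\max_{x\in A^c}\bP_{\cU}(\tau_x<\tau_\dagger)$, and feeding this into $\sum_{k}\EE[1/p(\cS_{T_k}^c)]$ inevitably produces a harmonic sum of order $\rr\log n$, exactly as you observed. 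Your attempted ``bulk/tail'' split does not help, because there is no mechanism in the increment picture that makes the bulk contribution collapse from $\rr\log n$ to $\rr$: the set $\cS_{T_k}^c$ is random and can be adversarially concentrated on low-$p$ vertices. Piling Aldous' variance bound and Chebyshev on top of an unproven step (1) does not rescue the argument.

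The paper takes a completely different route, the classical ``ignored walks'' argument of Lawler--Bramson--Griffeath. For each $x\in V$, once $x$ is first covered, the remaining trajectory $Z_x$ of that walk after reaching $x$ is discarded by the \IDLA\ construction; the $(Z_x)_{x\in V}$ are independent $K$-walks started from $x$. For a target site $z$, one compares $\cN_t(z):=\sum_{k\le t}{\bf 1}_{\{W_k\text{ visits }z\}}$ with $\cN_\star(z):=\sum_{x\ne z}{\bf 1}_{\{Z_x\text{ visits }z\}}$, noting that $\{z\notin\cS_t\}\subseteq\{\cN_t(z)\le\cN_\star(z)\}$. Both are sums of independent Bernoullis with $\EE[\cN_t(z)]=tp(z)$ and $\EE[\cN_\star(z)]=\sum_{x\ne z}\bP_x(\tau_z<\tau_\dagger)\le \rr\,p(z)$, the last inequality being exactly the content of the definition of $\rr$. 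A Chernoff-type comparison (Lemma~\ref{lm:win}) then gives $\PP(z\notin\cS_t)\le 2\exp\bigl(-c\,p(z)(\sqrt{t}-\sqrt{\rr})^2\bigr)$, and a union bound over $z$ together with $p(z)\ge 1/\trel$ yields $\PP(T>t)\le 2n\,e^{-(\sqrt{t}-\sqrt{\rr})^2/((1+\sqrt 2)\trel)}$, from which $\tsep\le(\sqrt{\rr}+3\sqrt{\trel\log n})^2$ follows directly. The point you were missing is that the role of $\rr$ is \emph{not} to lower-bound the escape probability $p(A^c)$ from a partially filled set, but to upper-bound the expected number of ghost visits to each individual site.
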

\begin{proof}We refine an argument first used in \cite{IDLA} in the case where $\nu$ is a Dirac mass, and then adapted in \cite{ARWmixing} to the case where $\nu$ is the uniform distribution. Fix a site $x\in V$, and let
\begin{eqnarray*}
T_x  & := &  \inf\{t\ge 0\colon x\in\cS_t\},
\end{eqnarray*}
be the first time that $x$ is occupied by $\IDLA$. By construction, the walk $W_{T_x}$ must hit $x$, and we let $Z_x$ denote its trajectory after the first visit to $x$. Note that this part of the trajectory is completely ignored in our construction  of $(\cS_t)_{t\ge 0}$. Moreover, by the Markov property, $Z_x$ is just a $K-$walk starting at $x$, and  the  walks $(Z_x\colon x\in V)$ are  independent. Now, fix $z\in V$ and let
\begin{eqnarray*}
\cN_\star(z) & := & \sum_{x\in V\setminus\{z\}}{\bf 1}_{\left\{\tau_z(Z_x)<\tau_\dagger(Z_x)\right\}},
\end{eqnarray*}
count the number of {ignored walks} that visit $z$. For $t\in\dN$, let also 
\begin{eqnarray*}
\cN_{t}(z) & := & \sum_{k=1}^t{\bf 1}_{\left\{\tau_z(W_k)<\tau_\dagger(W_k)\right\}},
\end{eqnarray*}
count those walks $W_1,\ldots,W_t$ that  hit $z$. Note that any visit to $z$ by a walk $W_k$  will result in  $\cS_k=\cS_{k-1}\cup\{z\}$, unless it occurs in the ignored part of $W_k$. Consequently, 
\begin{eqnarray*}
\PP\left(z\notin\cS_t\right) & \le & \PP\left(\cN_t(z)\le \cN_\star(z)\right).
\end{eqnarray*}
But $\cN_\star(z)$ and $\cN_t(z)$ are sums of independent Bernoulli variables, with 
\begin{eqnarray*}
\EE[\cN_t(z)] & = & tp(z)\\
 \EE[\cN_\star(z)] & = & \sum_{x\in V\setminus\{z\}}\bP_x(\tau_z<\tau_\dagger)\  \le \ \rr p(z).
\end{eqnarray*}
For $t\ge \rr$, we  have
$\sqrt{\EE[\cN_t(z)]}-\sqrt{\EE[\cN_\star(z)]}  \ge  \sqrt{p(z)}\left(\sqrt{t}-\sqrt{\rr}\right)$,
and Lemma \ref{lm:win} below gives
\begin{eqnarray*}
\PP(\cN_t(z)\le \cN_\star(z)) & \le & 2e^{-\frac{p(z)}{1+\sqrt{2}}\left(\sqrt{t}-\sqrt{\rr}\right)^2}.
\end{eqnarray*}
Taking a union bound over $z\in V$, we conclude that
\begin{eqnarray*}
\PP(T>t) & \le & 2\sum_{z\in V}e^{-\frac{p(z)}{1+\sqrt{2}}\left(\sqrt{t}-\sqrt{\rr}\right)^2}\\
& \le & 2ne^{-\frac{\left(\sqrt{t}-\sqrt{\rr}\right)^2}{(1+\sqrt{2})\trel}},
\end{eqnarray*}
where the second line uses Theorem \ref{th:trel}. This shows that
\begin{eqnarray*}
\tsep & \le & 1+\left(\sqrt{{\rr}}+\sqrt{2(1+\sqrt{2})\trel\log n}\right)^2\\
& \le & \left(\sqrt{{\rr}}+3\sqrt{\trel\log n}\right)^2,
\end{eqnarray*}
as desired.
\end{proof}

\begin{lemma}[Probability of an unexpected win]\label{lm:win}Let $U$ and $V$ be jointly defined random variables, each distributed as a sum of independent Bernoulli variables, and such that $\EE[U]\le\EE[V]$. Then,
\begin{eqnarray*}
\PP\left(V\le U\right) & \le & 2\exp\left\{-\frac{\left(\sqrt{\EE[V]}- \sqrt{\EE[U]}\right)^2}{1+\sqrt{2}}\right\}.
\end{eqnarray*}
\end{lemma}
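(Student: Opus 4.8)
The plan is to prove the tail bound via the standard exponential-moment (Chernoff) method, exploiting that both $U$ and $V$ are sums of independent Bernoulli variables. First I would reduce to the case where $U$ and $V$ are \emph{independent}: the event $\{V\le U\}$ only involves their joint law through this inequality, and by a union-bound-type argument $\PP(V\le U)\le \PP(V\le m)+\PP(U\ge m)$ for a well-chosen threshold $m$, which decouples the two variables entirely. The natural choice is the ``geometric-mean'' threshold $m:=\sqrt{\EE[U]\EE[V]}$, or equivalently $m$ chosen so that the two one-sided large-deviation exponents balance; with $a:=\EE[U]\le b:=\EE[V]$ this gives $m=\sqrt{ab}$, which lies between $a$ and $b$.

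Next, for each of the two pieces I would apply the classical multiplicative Chernoff bounds for sums of independent Bernoullis: $\PP(U\ge m)\le \exp\{-a\,\psi_+(m/a)\}$ for the upper tail and $\PP(V\le m)\le \exp\{-b\,\psi_-(m/b)\}$ for the lower tail, where $\psi_\pm$ are the usual rate functions ($\psi_-(1-\delta)=(1-\delta)\log(1-\delta)+\delta$ and similarly $\psi_+$). Substituting $m=\sqrt{ab}$ and writing $u:=\sqrt{b/a}\ge 1$, both exponents become functions of $u$ alone times $a$; the goal is then the purely analytic inequality that each exponent is at least $\frac{(\sqrt b-\sqrt a)^2}{1+\sqrt2}=\frac{a(u-1)^2}{1+\sqrt 2}$. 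Equivalently, one must check $\psi_+(u)\ge \frac{(u-1)^2}{1+\sqrt2}\cdot\frac{1}{u^{?}}$-type bounds — more cleanly, after the substitution the lower-tail exponent is $b\,\psi_-(1/u)$ and the upper-tail exponent is $a\,\psi_+(u)$, and one verifies $a\,\psi_+(u)\ge \frac{a(u-1)^2}{1+\sqrt2}$ and $b\,\psi_-(1/u)\ge\frac{a(u-1)^2}{1+\sqrt2}$ for all $u\ge 1$. Summing the two pieces produces the factor $2$ in front.

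The main obstacle I expect is the elementary-but-fiddly analytic verification in the last step: the constant $1+\sqrt2$ is not arbitrary — it is exactly what is needed to make a single clean bound cover \emph{both} the lower-tail factor (which for Bernoulli sums is controlled by $\psi_-(x)\ge \frac{(1-\sqrt x)^2}{?}$-type estimates, genuinely tight as $x\to 0$) and the upper-tail factor. One clean route is to use the sub-Gaussian-type bound $\psi_-(x)\ge \tfrac12(1-\sqrt x)^2$ combined with the sharper ``Poisson'' comparison on the upper side, and then note $\sqrt{ab}-a = \sqrt a(\sqrt b-\sqrt a)$ and $b-\sqrt{ab}=\sqrt b(\sqrt b - \sqrt a)$, so that the two exponents are $\tfrac12(\sqrt b-\sqrt a)^2$-type quantities up to constants depending on whether one measures deviation relative to $a$ or $b$; the worst constant over $u\ge1$ works out to $1+\sqrt2$. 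I would state the needed scalar inequalities as a short self-contained sub-claim, prove them by a one-variable calculus argument (monotonicity in $u$, boundary behaviour at $u=1$ and $u\to\infty$), and then assemble the final bound. The probabilistic content is entirely routine; all the work is in pinning down that one constant.
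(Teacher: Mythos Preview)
Your high-level strategy---split $\{V\le U\}$ by a threshold and apply Chernoff to each tail---is exactly what the paper does. The gap is in the choice of threshold. The geometric mean $m=\sqrt{ab}$ does \emph{not} balance the two exponents, and the scalar inequality you would need, namely $\psi_+(u)\ge \frac{(u-1)^2}{1+\sqrt 2}$ for all $u\ge 1$, is simply false: $\psi_+(u)=u\log u-u+1$ grows like $u\log u$, while the right-hand side grows like $u^2/(1+\sqrt 2)$, so the inequality fails for large $u$. Concretely, as $a/b\to 0$ your threshold $m=\sqrt{ab}\to 0$ and $\PP(U\ge m)\to 1$, so the upper-tail piece becomes useless, whereas the target bound is still nontrivial ($\approx 2e^{-b/(1+\sqrt 2)}$). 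The two heuristics you call ``equivalent'' (geometric mean vs.\ balancing the rate functions) are not the same thing, and neither yields the stated constant directly.

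The paper avoids this by working backwards. It uses the cruder quadratic-form Chernoff bounds $\PP(U\ge u+a)\le e^{-a^2/(2u+a)}$ and $\PP(V\le v-b)\le e^{-b^2/(2v)}$, fixes the common target exponent $2w$ with $w=\bigl(\frac{\sqrt v-\sqrt u}{2+\sqrt 2}\bigr)^2$, solves for the deviations $a=w+\sqrt{w^2+4uw}$ and $b=2\sqrt{vw}$ that achieve it, and then verifies algebraically that $u+a\le v-b$, so that the two one-sided events still cover $\{V\le U\}$. The constant $2+\sqrt 2$ is precisely what makes that last inequality hold with equality in the worst case; the entire ``fiddly analytic verification'' you anticipate is this single algebraic check, not a one-variable calculus argument on $\psi_\pm$. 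If you want to salvage your route, you must let the threshold depend on $a,b$ in a more asymmetric way (pushed much closer to $b$ when $a\ll b$); doing this carefully essentially reproduces the paper's computation.
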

\begin{proof}Set $u=\EE[U]$,  $v=\EE[V]$ and  $w:=\left(\frac{\sqrt{v}-\sqrt{u}}{2+\sqrt{2}}\right)^2$. Suppose we are given $a,b\ge 0$  satisfying
\begin{eqnarray}
\label{goal:win}
u+a  & \le & v-b.
\end{eqnarray}
This guarantees the inclusion $\{V\le U\}\subseteq \{U\ge u+b\}\cup \{V\le v-a\}$ and therefore
\begin{eqnarray*}
\PP\left(U\ge V\right) & \le & \PP\left(U\ge u+a\right)+\PP\left(V\le v-b\right) \\
& \le &   e^{-\frac{a^2}{2u+a}}+e^{-\frac{b^2}{2v}},
\end{eqnarray*}
where the second line uses Chernov bounds. To obtain the desired conclusion, we would like to set each of the two terms on the second line to $e^{-2w}$. In other words, we take
\begin{eqnarray*}
a &:= & w+\sqrt{w^2+4uw}\\
b &:= & 2\sqrt{vw}.
\end{eqnarray*}
It only remains to prove that this choice satisfies (\ref{goal:win}). Using $\sqrt{x+y}\le\sqrt{x}+\sqrt{y}$, we can write
\begin{eqnarray*}
u-v+a+b & = & u-v+2\sqrt{vw}+w+\sqrt{w^2+4uw}\\
& \le & u-v+2\sqrt{vw}+2w+2\sqrt{uw}\\
& = &\left(\sqrt{u}+\sqrt{v}\right)^2+2w-\left(\sqrt{v}-\sqrt{w}\right)^2\\
& \le & \left(\sqrt{u}+\sqrt{w}+\sqrt{2w}\right)^2-\left(\sqrt{v}-\sqrt{w}\right)^2\\
& = & 0,
\end{eqnarray*}
thanks to our careful choice of $w$.
\end{proof}

\begin{proof}[Proof of Theorem \ref{th:tsep}]The inequalities $\rl\le \trel\le\rr$ readily follow from  Theorem \ref{th:trel} and the  definitions of $\rl,\rr$. The upper bound on $\tsep$ is exactly Lemma \ref{lm:upper}. The lower bound $\tsep\ge n$ follows from the  observation that $\PP(\Tfill\ge n)=1$, because \IDLA\ grows by at most one site at each step. For the bound $\tsep\ge \trel$, we take $t=\tsep$ in the following argument, valid for all $t\in \dN$:
\begin{eqnarray*}
1-\frac{t}{\trel} & \le & \left(1-\frac{1}{\trel}\right)^{t}  \ = \ \left(1-p_\star\right)^t \ \le\  \dd(t),
\end{eqnarray*}
where the last inequality is (\ref{trel:lb}). Finally, the bound $\tsep\ge \frac{\rl \log n}{5}$ follows from  Lemmas \ref{lm:L1} and \ref{lm:lower} when $n\ge 100$, because we then have
\begin{eqnarray*}
\log n -\log\log n-1  & \ge & \frac{2\log n}{5}.
\end{eqnarray*}
For the case $n<100$, we instead simply use $\tsep\ge n\ge \frac{n\log n}{5} \ge \frac{\rl \log n}{5}$.
\end{proof}

\bibliographystyle{plain}
\bibliography{draft}
\end{document}